\newtheorem{theorem}{Theorem}
\theoremstyle{plain}
\newtheorem{corollary}{Corollary}
\newtheorem{lemma}{Lemma}
\newtheorem{remark}{Remark}
\numberwithin{equation}{section}
\begin{document}
\title[Failure of energy reversal]{A note on failure of energy reversal for
classical fractional singular integrals}

\begin{abstract}
For $0\leq \alpha <n$ we demonstrate the failure of energy reversal for the
vector of $\alpha $-fractional Riesz transforms, and more generally for the
vector of all $\alpha $-fractional convolution singular integrals having a
kernel with vanishing integral on every great circle of the sphere.
\end{abstract}

\author[E.T. Sawyer]{Eric T. Sawyer}
\address{ Department of Mathematics \& Statistics, McMaster University, 1280
Main Street West, Hamilton, Ontario, Canada L8S 4K1 }
\email{sawyer@mcmaster.ca}
\thanks{Research supported in part by NSERC}
\author[C.-Y. Shen]{Chun-Yen Shen}
\address{ Department of Mathematics \\
National Central University \\
Chungli, Taiwan 32054}
\email{chunyshen@gmail.com}
\thanks{Research supported in part by the NSC, through grant
NSC102-2115-M-008-015-MY2}
\author[I. Uriarte-Tuero]{Ignacio Uriarte-Tuero}
\address{ Department of Mathematics \\
Michigan State University \\
East Lansing MI }
\email{ignacio@math.msu.edu}
\thanks{ I. Uriarte-Tuero has been partially supported by grants DMS-1056965
(US NSF), MTM2010-16232, MTM2009-14694-C02-01 (Spain), and a Sloan
Foundation Fellowship. }
\maketitle
\tableofcontents

\section{Introduction}

To set notation we recall a special case of Theorem 1 from our paper \cite%
{SaShUr}, using notation from that paper.

\begin{theorem}
\label{special}Suppose that $\sigma $ and $\omega $ are locally finite
positive Borel measures in $\mathbb{R}^{n}$ with no common point masses, and
assume the finiteness of the $\alpha $\emph{-energy condition} constant%
\begin{eqnarray*}
\left( \mathcal{E}_{\alpha }\right) ^{2} &\equiv &\sup_{\substack{ Q=\dot{%
\cup}Q_{r} \\ Q,Q_{r}\in \mathcal{D}^{n}}}\frac{1}{\left\vert I\right\vert
_{\sigma }}\sum_{r=1}^{\infty }\sum_{J\in \mathcal{M}_{\mathbf{r}-\limfunc{%
deep}}\left( Q_{r}\right) }\left( \frac{\mathrm{P}^{\alpha }\left( J,\mathbf{%
1}_{Q}\sigma \right) }{\left\vert J\right\vert ^{\frac{1}{n}}}\right)
^{2}\left\Vert \mathsf{P}_{J}^{\limfunc{subgood},\omega }\mathbf{x}%
\right\Vert _{L^{2}\left( \omega \right) }^{2} \\
&&+\sup_{\ell \geq 0}\frac{1}{\left\vert I\right\vert _{\sigma }}\sum_{J\in 
\mathcal{M}_{\mathbf{r}-\limfunc{deep}}^{\ell }\left( Q\right) }\left( \frac{%
\mathrm{P}^{\alpha }\left( J,\mathbf{1}_{Q}\sigma \right) }{\left\vert
J\right\vert ^{\frac{1}{n}}}\right) ^{2}\left\Vert \mathsf{P}_{J}^{\limfunc{%
subgood},\omega }\mathbf{x}\right\Vert _{L^{2}\left( \omega \right) }^{2},
\end{eqnarray*}%
and its dual, uniformly over all dyadic grids $\mathcal{D}^{n}$, and where
the goodness parameters $\mathbf{r}$ and $\varepsilon $ implicit in the
definition of $\mathcal{M}_{\limfunc{deep}}\left( K\right) $ are fixed
sufficiently large and small respectively depending on $n$ and $\alpha $.
Let $\mathbf{T}^{\alpha }$ be a standard strongly elliptic $\alpha $%
-fractional Calder\'{o}n-Zygmund operator in Euclidean space $\mathbb{R}^{n}$%
. Then $\mathbf{T}^{\alpha }$ is bounded from $L^{2}\left( \sigma \right) $
to $L^{2}\left( \omega \right) $ \emph{if and only if} the $\mathcal{A}%
_{2}^{\alpha }$ condition%
\begin{equation}
\mathcal{A}_{2}^{\alpha }\equiv \sup_{Q\in \mathcal{Q}^{n}}\mathcal{P}%
^{\alpha }\left( Q,\sigma \right) \frac{\left\vert Q\right\vert _{\omega }}{%
\left\vert Q\right\vert ^{1-\frac{\alpha }{n}}}<\infty   \label{A2}
\end{equation}%
and its dual hold, the cube testing conditions%
\begin{equation}
\int_{Q}\left\vert \mathbf{T}^{\alpha }\left( \mathbf{1}_{Q}\sigma \right)
\right\vert ^{2}\omega \leq \mathfrak{T}_{T^{\alpha }}^{2}\int_{Q}d\sigma 
\text{ and }\int_{Q}\left\vert \left( \mathbf{T}^{\alpha }\right) ^{\ast
}\left( \mathbf{1}_{Q}\omega \right) \right\vert ^{2}\sigma \leq \mathfrak{T}%
_{T^{\alpha }}^{2}\int_{Q}d\omega ,  \label{testing}
\end{equation}%
hold for all cubes $Q$ in $\mathbb{R}^{n}$, and the weak boundedness
property for $\mathbf{T}^{\alpha }$ holds:%
\begin{eqnarray*}
&&\left\vert \int_{Q}\mathbf{T}^{\alpha }\left( 1_{Q^{\prime }}\sigma
\right) d\omega \right\vert \leq \mathcal{WBP}_{\mathbf{T}^{\alpha }}\sqrt{%
\left\vert Q\right\vert _{\omega }\left\vert Q^{\prime }\right\vert _{\sigma
}}, \\
&&\ \ \ \ \ \text{for all cubes }Q,Q^{\prime }\text{ with }\frac{1}{C}\leq 
\frac{\left\vert Q\right\vert ^{\frac{1}{n}}}{\left\vert Q^{\prime
}\right\vert ^{\frac{1}{n}}}\leq C, \\
&&\ \ \ \ \ \text{and either }Q\subset 3Q^{\prime }\setminus Q^{\prime }%
\text{ or }Q^{\prime }\subset 3Q\setminus Q.
\end{eqnarray*}
\end{theorem}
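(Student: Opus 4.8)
The plan is to follow the Nazarov--Treil--Volberg (NTV) programme for two weight norm inequalities, adapted to $\alpha$-fractional kernels; since Theorem \ref{special} is a special case of the main $T1$ theorem of \cite{SaShUr}, I only describe the architecture. The \emph{necessity} of the listed conditions is routine: the testing inequalities \eqref{testing} come from applying boundedness to $f=\mathbf{1}_Q$; the weak boundedness property is Cauchy--Schwarz on the bilinear form restricted to the indicated cubes; and the $\mathcal{A}_2^\alpha$ condition \eqref{A2} together with its dual is extracted by testing $\mathbf{T}^\alpha$ against indicators and using strong ellipticity to bound the kernel from below on a suitable cube, exactly as $A_2$ is deduced from boundedness of the Hilbert transform. (Here the energy conditions $\mathcal{E}_\alpha$ and its dual are a standing hypothesis, so they are not part of the equivalence to be proved.)

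For \emph{sufficiency} I would first pass to a pair of random dyadic grids and use the NTV good/bad cube decomposition to reduce to estimating $\langle \mathbf{T}^\alpha f,g\rangle_\omega$ when the Haar supports of $f$ and $g$ consist of good cubes. Expanding $f=\sum_I\Delta_I^\sigma f$ and $g=\sum_J\Delta_J^\omega g$ and splitting the double sum by the relative size and location of $I$ and $J$, the form decomposes into: a \emph{far/disjoint} piece (comparable or separated cubes), handled by $\mathcal{A}_2^\alpha$ alone; a \emph{comparable and touching} piece, handled by the weak boundedness property together with the testing constant; and the two symmetric \emph{below} forms, the representative one having $J$ deep inside $I$. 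The below form is the crux.

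To control the below form I would run the \emph{corona decomposition}: construct a stopping family $\mathcal{F}$ of $\sigma$-cubes by iterating a stopping time that halts whenever either the $\sigma$-average of $|f|$ doubles or the accumulated energy $\sum_{J\in\mathcal{M}_{\mathbf{r}-\mathrm{deep}}(Q)}\big(\tfrac{\mathrm{P}^\alpha(J,\mathbf{1}_Q\sigma)}{|J|^{1/n}}\big)^2\|\mathsf{P}_J^{\mathrm{subgood},\omega}\mathbf{x}\|_{L^2(\omega)}^2$ first exceeds a large multiple of $|Q|_\sigma$ — this is precisely where finiteness of $\mathcal{E}_\alpha$ is used, yielding the Carleson estimate $\sum_{F'\subset F}|F'|_\sigma\lesssim|F|_\sigma$. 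Grouping $f$ into coronas and splitting the below form accordingly produces a \emph{paraproduct} term, a \emph{neighbour} term, and a \emph{stopping}/\emph{far below} term. The paraproduct telescopes to $\mathbf{T}^\alpha(\mathbf{1}_F\sigma)$ paired against a Haar projection of $g$ and is bounded by $\mathfrak{T}_{T^\alpha}$ via a Carleson embedding theorem; the neighbour term is absorbed into $\mathcal{A}_2^\alpha$; and for the stopping term one applies the monotonicity (``energy'') lemma, which replaces the kernel by the fractional Poisson integral $\mathrm{P}^\alpha(J,\cdot)$ and the Haar data by $\|\mathsf{P}_J^{\mathrm{subgood},\omega}\mathbf{x}\|_{L^2(\omega)}$, reducing everything to a \emph{functional energy inequality}.

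The functional energy inequality — that the expression defining $(\mathcal{E}_\alpha)^2$ stays under control when $\mathbf{1}_Q\sigma$ is replaced by an arbitrary Carleson sequence of balayages of $\sigma$ — is the step I expect to be the main obstacle. I would prove it by a Whitney/dyadic decomposition of the annular regions where the Poisson tail of a stopping cube is felt, splitting off a local part bounded by the dual testing condition on $\mathbf{T}^\alpha$ (composed with a vector-valued square function bound) and a global part summed using $\mathcal{A}_2^\alpha$ and the energy Carleson estimate. Finally one closes the argument by observing that the stopping/far below form is geometrically self-improving in the stopping recursion, so its total contribution sums as a convergent geometric series. Combining the three types of pieces gives $|\langle \mathbf{T}^\alpha f,g\rangle_\omega|\lesssim\big(\sqrt{\mathcal{A}_2^\alpha}+\mathfrak{T}_{T^\alpha}+\mathcal{WBP}_{\mathbf{T}^\alpha}+\mathcal{E}_\alpha\big)\|f\|_{L^2(\sigma)}\|g\|_{L^2(\omega)}$, which is the asserted boundedness from $L^2(\sigma)$ to $L^2(\omega)$.
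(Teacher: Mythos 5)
You should first note that this paper does not prove Theorem \ref{special} at all: it is recalled verbatim as ``a special case of Theorem 1 from our paper \cite{SaShUr}'' purely to fix notation for the energy constants, and the body of the note is devoted to the failure of energy reversal, not to the $T1$ theorem. So there is no internal proof to compare your sketch against; the relevant argument lives in \cite{SaShUr} (and \cite{SaShUr2}). Measured against that argument, your outline does reproduce the correct NTV-style architecture: necessity of testing, weak boundedness and the tailed $\mathcal{A}_{2}^{\alpha }$ condition (the last using strong ellipticity), random grids and good/bad reduction, splitting into disjoint/comparable/below forms, the energy corona decomposition with stopping data given by averages of $f$ and the accumulated energy (this is indeed where finiteness of $\mathcal{E}_{\alpha }$ enters via a Carleson condition), the paraproduct/neighbour/stopping split, the monotonicity lemma replacing the kernel by $\mathrm{P}^{\alpha }(J,\cdot )$ and $\Vert \mathsf{P}_{J}^{\limfunc{subgood},\omega }\mathbf{x}\Vert _{L^{2}(\omega )}$, and the reduction to a functional energy inequality.

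However, as a proof your proposal has genuine gaps precisely at the two places where the real work in \cite{SaShUr} is done. First, the functional energy inequality is not proved by the dual testing condition composed with a square function bound; in \cite{SaShUr} the functional energy constant is controlled by $\mathcal{E}_{\alpha }$ (and its dual) together with $\mathcal{A}_{2}^{\alpha }$, via a Whitney decomposition in which the \emph{local} part is absorbed by the energy condition (exploiting goodness and Poisson decay) and the \emph{global} part by $\mathcal{A}_{2}^{\alpha }$; your description misattributes the mechanism and gives no argument for either part. Second, your claim that the stopping/far-below form ``is geometrically self-improving in the stopping recursion, so its total contribution sums as a convergent geometric series'' is an assertion, not a proof: controlling the stopping form is the most delicate step of the whole programme (in one dimension it required Lacey's size-functional recursion, and in \cite{SaShUr}/\cite{SaShUr2} a nontrivial higher-dimensional substitute), and nothing in your outline explains why the recursion closes with constants depending only on $\mathcal{A}_{2}^{\alpha }$, $\mathfrak{T}_{T^{\alpha }}$, $\mathcal{WBP}_{\mathbf{T}^{\alpha }}$ and $\mathcal{E}_{\alpha }$. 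Until those two components are actually carried out, the sufficiency direction is not established; the necessity direction as you describe it is essentially fine, modulo the routine annular summation needed to upgrade the untailed $A_{2}^{\alpha }$ to the Poisson-tailed condition (\ref{A2}).
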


In \cite{SaShUr3} we used Theorem \ref{special} to prove the $T1$ theorem
for the vector of Riesz transforms in $\mathbb{R}^{n}$ in the special case
when one of the measures $\sigma ,\omega $ is supported on a line in $%
\mathbb{R}^{n}$. The key to that proof was proving control of the above
energy constants $\mathcal{E}_{\alpha }$ and $\mathcal{E}_{\alpha }^{\ast }$
in terms of the constants in the hypotheses (\ref{A2}) and (\ref{testing}).
A number of attempts have been made by us and others (see e.g. earlier
versions of \cite{SaShUr} and \cite{LaWi}) to prove such control of various
different energy conditions by invoking an \emph{energy reversal} for the
Riesz transforms and similar operators - see (\ref{will fail}) below - but
all of these attempts have been met with failure. The purpose of this short
note is to show first that \emph{energy reversal} is false, not only for the
vector of $\alpha $-fractional Riesz transforms in the plane when $0\leq
\alpha <2$, but also for the vectors of classical $\alpha $-fractional
singular integrals in the plane, 
\begin{eqnarray*}
\mathbf{T}_{M}^{\alpha } &\equiv &\left\{ T_{\Omega }:\Omega \in \mathcal{P}%
_{M}\right\} , \\
\mathcal{P}_{M} &\equiv &\left\{ \cos n\theta ,\sin n\theta \right\}
_{n=1}^{M}\ ,
\end{eqnarray*}%
where $T_{\Omega }^{\alpha }$ has convolution kernel $\frac{\Omega \left( 
\frac{x}{\left\vert x\right\vert }\right) }{\left\vert x\right\vert
^{2-\alpha }}=\frac{\Omega \left( \theta \right) }{\left\vert x\right\vert
^{2-\alpha }}$ and $0\leq \alpha <2$. The linear space $\mathcal{L}_{M}$ of
trigonometric polynomials with vanishing mean and degree at most $M$ is
spanned by the monomials $\mathcal{P}_{M}$, and so we also obtain the
failure of energy reversal for the infinite vector $\mathbf{T}_{M}^{\alpha
}\equiv \left\{ T_{\Omega }:\Omega \in \mathcal{L}_{M}\right\} $. A standard
limiting argument applied to the proof below extends this failure to all
sufficiently smooth $\Omega \left( \theta \right) $ with vanishing mean on
the circle. Finally, we embed an analogue of the planar measure constructed
below into Euclidean space $\mathbb{R}^{n}$ in order to obtain the failure
of energy reversal for \emph{any} vector of classical convolution Calder\'{o}%
n-Zygmund operators with odd kernel in $\mathbb{R}^{n}$ - and more generally
for kernels $\frac{\Omega \left( x^{\prime }\right) }{\left\vert
x\right\vert ^{n-\alpha }}$ where $\Omega $ has vanishing integral on every
great circle in the sphere $\mathbb{S}^{n-1}$. A key to our proof is the
positivity of the determinants $\det \left[ \frac{\Gamma \left( z\right) ^{2}%
}{\Gamma \left( z-\left\vert i-j\right\vert \right) \Gamma \left(
z+\left\vert i-j\right\vert \right) }\right] _{i,j=1}^{n}$ for all $n\geq 1$%
. See also \cite{LaWi} for related results regarding fractional Riesz
transforms in higher dimensions. We thank Michael Lacey for pointing out to
us that the $1$-fractional Riesz transform $\mathbf{R}^{1}\sigma \left(
z\right) =\int_{\mathbb{T}}\frac{z-\xi }{\left\vert z-\xi \right\vert ^{2}}%
d\sigma \left( \xi \right) $ of the unit circle measure $\sigma $ vanishes
identically for $z$ inside the unit disk. Indeed, $\mathbf{R}^{1}\sigma $ is
the gradient of the planar Newtonian potential $\mathbf{N}\sigma \left(
z\right) =\int_{\mathbb{T}}\ln \left\vert z-\xi \right\vert d\sigma \left(
\xi \right) $, and $\mathbf{N}\sigma $ is constant inside the disk.

\section{Failure of reversal of energy}

Recall the energy $\mathsf{E}\left( J,\omega \right) $ of $\omega $ on a
cube $J$,%
\begin{equation*}
\mathsf{E}\left( J,\omega \right) ^{2}\equiv \frac{1}{\left\vert
J\right\vert _{\omega }}\frac{1}{\left\vert J\right\vert _{\omega }}%
\int_{J}\int_{J}\left\vert \frac{x-z}{\left\vert J\right\vert ^{\frac{1}{n}}}%
\right\vert ^{2}d\omega \left( x\right) d\omega \left( z\right) =2\frac{1}{%
\left\vert J\right\vert _{\omega }}\int_{J}\left\vert \frac{x-\mathbb{E}%
_{J}^{\omega }x}{\left\vert J\right\vert ^{\frac{1}{n}}}\right\vert
^{2}d\omega \left( x\right) .
\end{equation*}%
Define its associated \emph{coordinate} energies $\mathsf{E}^{j}\left(
J,\omega \right) $ by%
\begin{equation*}
\mathsf{E}^{j}\left( J,\omega \right) ^{2}\equiv \frac{1}{\left\vert
J\right\vert _{\omega }}\frac{1}{\left\vert J\right\vert _{\omega }}%
\int_{J}\int_{J}\left\vert \frac{x^{j}-z^{j}}{\left\vert J\right\vert ^{%
\frac{1}{n}}}\right\vert ^{2}d\omega \left( x\right) d\omega \left( z\right)
,\ \ \ \ \ j=1,2,...,n,
\end{equation*}%
and the rotations $\mathsf{E}_{\mathcal{R}}^{j}\left( J,\omega \right) $ of
the coordinate energies by a rotation $\mathcal{R}\in SO\left( n\right) $,
which we refer to as \emph{partial} energies,%
\begin{equation*}
\mathsf{E}_{\mathcal{R}}^{j}\left( J,\omega \right) ^{2}\equiv \frac{1}{%
\left\vert J\right\vert _{\omega }}\frac{1}{\left\vert J\right\vert _{\omega
}}\int_{J}\int_{J}\left\vert \frac{x_{\mathcal{R}}^{j}-z_{\mathcal{R}}^{j}}{%
\left\vert J\right\vert ^{\frac{1}{n}}}\right\vert ^{2}d\omega \left(
x\right) d\omega \left( z\right) ,\ \ \ \ \ j=1,2,...,n,
\end{equation*}%
where for $\mathcal{R}\in SO\left( n\right) $, $x_{\mathcal{R}}=\left( x_{%
\mathcal{R}}^{j}\right) _{j=1}^{n}=\mathcal{R}\left( x^{j}\right) _{j=1}^{n}=%
\mathcal{R}x$. Set $\mathsf{E}_{\mathcal{R}}\left( J,\omega \right)
^{2}\equiv \mathsf{E}_{\mathcal{R}}^{1}\left( J,\omega \right) ^{2}+...+%
\mathsf{E}_{\mathcal{R}}^{n}\left( J,\omega \right) ^{2}$. We have the
following elementary computations.

\begin{lemma}
\label{partial energies}For $\mathcal{R}\in SO\left( n\right) $ we have%
\begin{equation}
\mathsf{E}_{\mathcal{R}}\left( J,\omega \right) ^{2}=\mathsf{E}_{\mathcal{R}%
}^{1}\left( J,\omega \right) ^{2}+...+\mathsf{E}_{\mathcal{R}}^{n}\left(
J,\omega \right) ^{2}=\mathsf{E}\left( J,\omega \right) ^{2}.
\label{rotated partial energies}
\end{equation}%
More generally, if $\mathfrak{R}=\left\{ \mathcal{R}_{j}\right\}
_{j=1}^{n}\subset SO\left( n\right) $ is a collection of rotations such that
the matrix $M_{\mathfrak{R}}=\left[ 
\begin{array}{c}
\mathcal{R}_{1}\mathbf{e}^{1} \\ 
\vdots \\ 
\mathcal{R}_{n}\mathbf{e}^{1}%
\end{array}%
\right] $ with rows $\mathcal{R}_{\ell }\mathbf{e}^{1}$ is nonsingular, then 
\begin{equation}
\mathsf{E}\left( J,\omega \right) ^{2}\leq \frac{1}{\epsilon _{\mathfrak{R}}}%
\dsum\limits_{\ell =1}^{n}\mathsf{E}_{\mathcal{R}_{\ell }}^{1}\left(
J,\omega \right) ^{2},  \label{est partial}
\end{equation}%
where $\epsilon _{\mathfrak{R}}$ is the least eigenvalue of $M_{\mathfrak{R}%
}^{\ast }M_{\mathfrak{R}}$.
\end{lemma}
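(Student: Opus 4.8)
The plan is to reduce everything to one linear-algebra object, the covariance matrix. Fix $J$ and $\omega$, write $m\equiv\mathbb{E}_{J}^{\omega}x$ for the $\omega$-barycenter of $J$, and let $\Sigma=\Sigma_{J,\omega}$ be the symmetric positive semidefinite $n\times n$ matrix
\[
\Sigma\equiv\frac{1}{\left\vert J\right\vert _{\omega}}\int_{J}(x-m)(x-m)^{T}\,d\omega(x).
\]
The polarization identity already used in the statement (the second equality in the definition of $\mathsf{E}(J,\omega)$) generalizes to: for every $v\in\mathbb{R}^{n}$,
\[
\frac{1}{\left\vert J\right\vert _{\omega}^{2}}\int_{J}\int_{J}\bigl\vert v\cdot(x-z)\bigr\vert ^{2}\,d\omega(x)\,d\omega(z)=2\,v^{T}\Sigma v .
\]
Taking $v=\mathbf{e}^{j}$ gives $\mathsf{E}^{j}(J,\omega)^{2}=\frac{2}{\left\vert J\right\vert ^{2/n}}\Sigma_{jj}$, hence $\mathsf{E}(J,\omega)^{2}=\frac{2}{\left\vert J\right\vert ^{2/n}}\operatorname{tr}\Sigma$; taking $v=v_{\ell}$, where $v_{\ell}$ is the unit vector for which $x_{\mathcal{R}_{\ell}}^{1}=v_{\ell}\cdot x$ (namely the $\ell$-th row of $M_{\mathfrak{R}}$, the row-versus-column transpose bookkeeping being harmless since $SO(n)$ is closed under transpose), gives $\mathsf{E}_{\mathcal{R}_{\ell}}^{1}(J,\omega)^{2}=\frac{2}{\left\vert J\right\vert ^{2/n}}v_{\ell}^{T}\Sigma v_{\ell}$; and applying it to all $n$ rows of a single $\mathcal{R}\in SO(n)$ gives $\sum_{j=1}^{n}\mathsf{E}_{\mathcal{R}}^{j}(J,\omega)^{2}=\frac{2}{\left\vert J\right\vert ^{2/n}}\operatorname{tr}(\mathcal{R}\Sigma\mathcal{R}^{T})$.

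With this, the first assertion (\ref{rotated partial energies}) is immediate: the covariance matrix of $\mathcal{R}x$ is $\mathcal{R}\Sigma\mathcal{R}^{T}$, and $\operatorname{tr}(\mathcal{R}\Sigma\mathcal{R}^{T})=\operatorname{tr}\Sigma$ since $\mathcal{R}$ is orthogonal (equivalently $|\mathcal{R}(x-z)|=|x-z|$ pointwise), so $\sum_{j=1}^{n}\mathsf{E}_{\mathcal{R}}^{j}(J,\omega)^{2}=\frac{2}{\left\vert J\right\vert ^{2/n}}\operatorname{tr}\Sigma=\mathsf{E}(J,\omega)^{2}$; the unrotated identity is the case $\mathcal{R}=I$.

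For the estimate (\ref{est partial}) I would write
\[
\sum_{\ell=1}^{n}\mathsf{E}_{\mathcal{R}_{\ell}}^{1}(J,\omega)^{2}=\frac{2}{\left\vert J\right\vert ^{2/n}}\sum_{\ell=1}^{n}\operatorname{tr}\bigl(\Sigma\,v_{\ell}v_{\ell}^{T}\bigr)=\frac{2}{\left\vert J\right\vert ^{2/n}}\operatorname{tr}\bigl(\Sigma\,M_{\mathfrak{R}}^{\ast}M_{\mathfrak{R}}\bigr),
\]
using $\sum_{\ell=1}^{n}v_{\ell}v_{\ell}^{T}=M_{\mathfrak{R}}^{\ast}M_{\mathfrak{R}}$. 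Since $M_{\mathfrak{R}}$ is nonsingular, $M_{\mathfrak{R}}^{\ast}M_{\mathfrak{R}}\succeq\epsilon_{\mathfrak{R}}I$ in the Loewner order, i.e. $M_{\mathfrak{R}}^{\ast}M_{\mathfrak{R}}-\epsilon_{\mathfrak{R}}I$ is positive semidefinite; combined with $\Sigma\succeq 0$ and the elementary fact that $\operatorname{tr}(AB)\geq 0$ for positive semidefinite $A,B$ (write $A=A^{1/2}A^{1/2}$ and use cyclicity of the trace), this yields $\operatorname{tr}\bigl(\Sigma(M_{\mathfrak{R}}^{\ast}M_{\mathfrak{R}}-\epsilon_{\mathfrak{R}}I)\bigr)\geq 0$, hence $\operatorname{tr}(\Sigma\,M_{\mathfrak{R}}^{\ast}M_{\mathfrak{R}})\geq\epsilon_{\mathfrak{R}}\operatorname{tr}\Sigma$. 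Therefore $\sum_{\ell=1}^{n}\mathsf{E}_{\mathcal{R}_{\ell}}^{1}(J,\omega)^{2}\geq\epsilon_{\mathfrak{R}}\cdot\frac{2}{\left\vert J\right\vert ^{2/n}}\operatorname{tr}\Sigma=\epsilon_{\mathfrak{R}}\,\mathsf{E}(J,\omega)^{2}$, which is (\ref{est partial}).

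I do not expect a genuine obstacle here: the whole content is the translation into the covariance matrix together with the one-line Loewner-order/trace inequality. The only points requiring care are clerical — keeping the normalizing factors $\left\vert J\right\vert ^{2/n}$ and $\left\vert J\right\vert _{\omega}$ straight, and matching the transpose convention relating $\mathcal{R}_{\ell}$, its first row and column, and the rows of $M_{\mathfrak{R}}$ — each of which I would dispose of in a sentence.
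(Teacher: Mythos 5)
Your proposal is correct and is essentially the paper's argument in different packaging: the paper proves (\ref{rotated partial energies}) from the pointwise identity $\left\vert \mathcal{R}\left( x-z\right) \right\vert =\left\vert x-z\right\vert $ and (\ref{est partial}) by applying the least-eigenvalue bound $\epsilon _{\mathfrak{R}}\left\vert x-z\right\vert ^{2}\leq \left( x-z\right) ^{tr}M_{\mathfrak{R}}^{\ast }M_{\mathfrak{R}}\left( x-z\right) $ pointwise under the double integral, whereas you first condense the double integral into the covariance matrix via polarization and then apply the same eigenvalue bound as a Loewner/trace inequality. Both arguments hinge on the identical fact about $M_{\mathfrak{R}}^{\ast }M_{\mathfrak{R}}$, and your transpose-bookkeeping remark about $\mathcal{R}_{\ell }\mathbf{e}^{1}$ versus the first row of $\mathcal{R}_{\ell }$ is the same harmless convention the paper uses implicitly.
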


\begin{proof}
We have%
\begin{eqnarray*}
\left\vert x_{\mathcal{R}}^{1}-z_{\mathcal{R}}^{1}\right\vert
^{2}+...+\left\vert x_{\mathcal{R}}^{n}-z_{\mathcal{R}}^{n}\right\vert ^{2}
&=&\left\vert \mathcal{R}\left( x-z\right) \right\vert ^{2} \\
&=&\left\vert x-z\right\vert ^{2}=\left\vert x^{1}-z^{1}\right\vert
^{2}+...+\left\vert x^{n}-z^{n}\right\vert ^{2},
\end{eqnarray*}%
so that%
\begin{eqnarray*}
\mathsf{E}_{\mathcal{R}}\left( J,\omega \right) ^{2} &\equiv &\mathsf{E}_{%
\mathcal{R}}^{1}\left( J,\omega \right) ^{2}+...+\mathsf{E}_{\mathcal{R}%
}^{n}\left( J,\omega \right) ^{2} \\
&=&\mathsf{E}^{1}\left( J,\omega \right) ^{2}+...+\mathsf{E}^{n}\left(
J,\omega \right) ^{2}=\mathsf{E}\left( J,\omega \right) ^{2}.
\end{eqnarray*}%
More generally, if $M_{\mathfrak{R}}^{\ell }$ denotes the $\ell ^{th}$ row
of the matrix $M_{\mathfrak{R}}$, we have%
\begin{eqnarray*}
\epsilon _{\mathfrak{R}}\left\vert x-z\right\vert ^{2} &\leq &\left(
x-z\right) ^{tr}M_{\mathfrak{R}}^{\ast }M_{\mathfrak{R}}\left( x-z\right) \\
&=&\sum_{\ell =1}^{n}\left\vert \mathcal{R}_{\ell }\mathbf{e}^{1}\cdot
\left( x-z\right) \right\vert ^{2},
\end{eqnarray*}%
so that%
\begin{eqnarray*}
\epsilon _{\mathfrak{R}}\mathsf{E}\left( J,\omega \right) ^{2} &=&\left( 
\frac{1}{\left\vert J\right\vert _{\omega }\left\vert J\right\vert ^{\frac{1%
}{n}}}\right) ^{2}\int_{J}\int_{J}\epsilon _{\mathfrak{R}}\left\vert
x-z\right\vert ^{2}d\omega \left( x\right) d\omega \left( z\right) \\
&\leq &\left( \frac{1}{\left\vert J\right\vert _{\omega }\left\vert
J\right\vert ^{\frac{1}{n}}}\right) ^{2}\int_{J}\int_{J}\left\{ \sum_{\ell
=1}^{n}\left\vert \mathcal{R}_{\ell }\mathbf{e}^{1}\cdot \left( x-z\right)
\right\vert ^{2}\right\} d\omega \left( x\right) d\omega \left( z\right) \\
&=&\sum_{\ell =1}^{n}\mathsf{E}_{\mathcal{R}_{\ell }}^{1}\left( J,\omega
\right) ^{2}.
\end{eqnarray*}
\end{proof}

The point of the estimate (\ref{est partial}) is that it could hopefully be
used to help obtain a reversal of energy for a vector transform $\mathbf{T}%
^{n,\alpha }=\left\{ T_{\ell }^{n,\alpha }\right\} _{\ell =1}^{n}$, where
the convolution kernel $K_{\ell }^{n,\alpha }\left( w\right) $ of the
operator $T_{\ell }^{n,\alpha }$ has the form%
\begin{equation}
K_{\ell }^{n,\alpha }\left( w\right) =\frac{\Omega _{\ell }^{n}\left( \frac{w%
}{\left\vert w\right\vert }\right) }{\left\vert w\right\vert ^{n-\alpha }},
\label{classical form}
\end{equation}%
and where $\Omega _{\ell }^{n}$ is smooth on the sphere $\mathbb{S}^{n-1}$.
We refer to the operator $T_{\ell }^{n,\alpha }$ as an $\alpha $-fractional 
\emph{convolution} Calder\'{o}n-Zygmund operator. If in addition we require
that $\Omega _{\ell }^{n}$ has vanishing integral on the sphere $\mathbb{S}%
^{n-1}$, we refer to $T_{\ell }^{n,\alpha }$ as a \emph{classical} $\alpha $%
-fractional Calder\'{o}n-Zygmund operator.

However, we now dash this hope, at least for the most familiar singular
operators in the plane, in a spectacular way. A vector $\mathbf{T}^{\alpha
}=\left\{ T_{\ell }^{\alpha }\right\} _{\ell =1}^{N}$ of $\alpha $%
-fractional transforms in Euclidean space $\mathbb{R}^{n}$ satisfies a \emph{%
strong} reversal of $\omega $-energy on a cube $J$ if there is a positive
constant $C_{0}$ such that for all $\gamma \geq 2$ sufficiently large and
for all positive measures $\mu $ supported outside $\gamma J$, we have the
inequality%
\begin{equation}
\mathsf{E}\left( J,\omega \right) ^{2}\mathrm{P}^{\alpha }\left( J,\mu
\right) ^{2}\leq C_{0}\ \mathbb{E}_{J}^{d\omega \left( x\right) }\mathbb{E}%
_{J}^{d\omega \left( z\right) }\left\vert \mathbf{T}^{\alpha }\mu \left(
x\right) -\mathbf{T}^{\alpha }\mu \left( z\right) \right\vert ^{2}.
\label{will fail}
\end{equation}%
We show that (\ref{will fail}) is false by stating and proving a variant of
Lemma 9 in \cite{SaShUr2}.

\begin{lemma}[Failure of Reverse Energy]
\label{LRE}Suppose that$\ J$ is a square in the plane $\mathbb{R}^{2}$, $%
0\leq \alpha <2$, $\gamma >2$ and that $\mathbf{R}^{\alpha }=\left\{ R_{\ell
}^{\alpha }\right\} _{\ell =1}^{2}$ is the vector of $\alpha $-fractional
Riesz transforms in the plane $\mathbb{R}^{2}$ with kernels $K_{\ell
}^{\alpha }\left( w\right) =\frac{\Omega _{\ell }\left( \frac{w}{\left\vert
w\right\vert }\right) }{\left\vert w\right\vert ^{2-\alpha }}$ and $\Omega
_{\ell }\left( \frac{w}{\left\vert w\right\vert }\right) =\frac{w_{\ell }}{%
\left\vert w\right\vert }$. Finally suppose that $C_{0}>0$ is given. For $%
\gamma $ sufficiently large, there exists a positive measure $\mu $ on $%
\mathbb{R}^{2}$ supported outside $\gamma J$ and depending only on $\alpha $
and $\gamma $, such that the strong reversal of energy inequality (\ref{will
fail}) \textbf{fails}. Moreover, we can choose $\mu $ as above so that in
addition, for any $M\geq 1$, the strong reversal of energy inequality (\ref%
{will fail}) fails for the vector $\mathbf{T}_{M}^{\alpha }$.
\end{lemma}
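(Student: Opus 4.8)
The plan is the following. Translating and rescaling (the inequality (\ref{will fail}) is invariant under this), assume $J$ is the unit square centered at the origin. I will take $\mu$ supported on the circle $\{|y|=\gamma\}$ — which lies outside $\gamma J$ — with total mass $1$, and $\omega=\tfrac12(\delta_{v/4}+\delta_{-v/4})$ with $v=(1,0)$, so that $\mathsf{E}(J,\omega)^2=\tfrac18$ and $\mathbb{E}_J^{d\omega(x)}\mathbb{E}_J^{d\omega(z)}|F(x)-F(z)|^2=\tfrac12|F(v/4)-F(-v/4)|^2$ for every $F$. Since $\mathrm{supp}\,\mu$ is at distance $\gamma\gg1$ from $J$, also $\mathrm{P}^\alpha(J,\mu)\asymp\gamma^{\alpha-3}$. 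Thus (\ref{will fail}) for $\mathbf T_M^\alpha$ reduces to comparing $\gamma^{2\alpha-6}$ with $C_0|\mathbf T_M^\alpha\mu(v/4)-\mathbf T_M^\alpha\mu(-v/4)|^2$, and the whole construction aims to choose $\mu\ge0$ so that every component $T_\Omega^\alpha\mu$, $\Omega\in\mathcal P_M$, is \emph{affine along the segment $[-v/4,v/4]$ up to an error $O_M(\gamma^{\alpha-4})$}: then the right side is $\lesssim_M C_0\,\gamma^{2\alpha-8}$ and the ratio of left to right sides is $\gtrsim_M\gamma^2/C_0\to\infty$, so (\ref{will fail}) fails once $\gamma$ is large. (When $\alpha=1$ no work is needed, by Lacey's observation: for $\sigma_\gamma$ the uniform measure on the circle of radius $\gamma$, $\mathbf R^1\sigma_\gamma=\nabla\mathbf N\sigma_\gamma\equiv0$ inside the disk, and more generally $T_\Omega^1\sigma_\gamma$ vanishes to all orders at the origin.)

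To arrange this I Taylor-expand $T_\Omega^\alpha\mu$ at the origin. Identify $\mathbb R^2\cong\mathbb C$; the kernel of $T^\alpha_{e^{im\theta}}$ is $K_m(w)=(w/|w|)^m|w|^{\alpha-2}=w^{p_m}\bar w^{q_m}$ with $p_m=\tfrac{m+\alpha-2}{2}$, $q_m=\tfrac{\alpha-2-m}{2}$; since $\mu$ is supported away from $J$ we may differentiate under the integral, and using $\partial_zK_m=p_mw^{p_m-1}\bar w^{q_m}$, $\partial_{\bar z}K_m=q_mw^{p_m}\bar w^{q_m-1}$ evaluated at the points $-y$ with $|y|=\gamma$ we get
\[
\nabla\big(T^\alpha_{e^{im\cdot}}\mu\big)(0)=\gamma^{\alpha-3}(-1)^{m-1}\Big[p_m\,\widehat\mu(m-1)\,(1,i)+q_m\,\widehat\mu(m+1)\,(1,-i)\Big],
\]
where $\widehat\mu(k)=\int e^{ik\beta}d\mu$ are the Fourier coefficients of the angular part of $\mu$; the constant term cancels in $F(v/4)-F(-v/4)$, and all second (and higher) derivatives of $T_\Omega^\alpha\mu$ on $J$ are $O_M(\gamma^{\alpha-4})$ because $\mathrm{dist}(J,\mathrm{supp}\,\mu)\asymp\gamma$, which is the claimed error. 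If $\mu$ is symmetric under $w\mapsto\bar w$ (so every $\widehat\mu(k)\in\mathbb R$), then $\nabla(T^\alpha_{e^{im\cdot}}\mu)(0)\cdot v$ is real, so the linear part of $T^\alpha_{\sin m\theta}\mu=\mathrm{Im}\,T^\alpha_{e^{im\cdot}}\mu$ along $v$ vanishes automatically, while that of $T^\alpha_{\cos m\theta}\mu=\mathrm{Re}\,T^\alpha_{e^{im\cdot}}\mu$ vanishes precisely when
\[
p_m\,\widehat\mu(m-1)+q_m\,\widehat\mu(m+1)=0,\qquad m=1,\dots,M.
\]
These recursions leave $\widehat\mu(0)=1$ and $\widehat\mu(1)$ free; taking $\widehat\mu(1)=0$ kills all odd coefficients up to index $M+1$ and forces
\[
\widehat\mu(2j)=\prod_{i=0}^{j-1}\frac{\alpha-1+2i}{3-\alpha+2i}=(-1)^{j}\,\frac{\Gamma(z)^2}{\Gamma(z-j)\Gamma(z+j)},\qquad z:=\frac{3-\alpha}{2}\in\big(\tfrac12,\tfrac32\big].
\]

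Such a symmetric $\mu\ge0$ on the circle exists precisely when this finite list of numbers is the Fourier data of a positive measure on $\mathbb S^1$, i.e.\ when the Toeplitz matrix $\big[\widehat\mu(|i-j|)\big]_{i,j=0}^{M+1}$ is positive semidefinite. Because the odd coefficients vanish, this matrix is (after sorting indices by parity) a direct sum of two copies of $\big[\widehat\mu(2|i-j|)\big]$, and conjugating by $\mathrm{diag}((-1)^i)$ turns $\big[\widehat\mu(2|i-j|)\big]$ into $\big[\tfrac{\Gamma(z)^2}{\Gamma(z-|i-j|)\Gamma(z+|i-j|)}\big]$. Hence the construction goes through as soon as $\det\big[\tfrac{\Gamma(z)^2}{\Gamma(z-|i-j|)\Gamma(z+|i-j|)}\big]_{i,j=1}^n>0$ for all $n\ge1$ — the positivity promised in the introduction. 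Granting it, all leading principal minors are positive, the truncated trigonometric moment problem is solvable, and I may take a finitely atomic solution $\nu$ on $\mathbb S^1$, replace it by $\tfrac12(\nu+\bar\nu)$ to make it $w\mapsto\bar w$-symmetric (this does not change $\widehat\nu(k)$ for $|k|\le M+1$), and let $\mu$ be $\nu$ transported to the circle of radius $\gamma$. For $M=1$ — which is already the case of $\mathbf R^\alpha$, since $\mathbf R^\alpha=\{T^\alpha_{\cos\theta},T^\alpha_{\sin\theta}\}$ — the only requirement is $|\widehat\mu(2)|=\tfrac{|\alpha-1|}{3-\alpha}<1$, which holds for all $0\le\alpha<2$ with no determinant input (e.g.\ take $\mu$ a suitable nonnegative combination of the uniform circle measure and two antipodal point masses).

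Finally, with $\mu,\omega$ chosen this way, $T_\Omega^\alpha\mu(v/4)-T_\Omega^\alpha\mu(-v/4)=\tfrac12\nabla(T_\Omega^\alpha\mu)(0)\cdot v+O_M(\gamma^{\alpha-4})=O_M(\gamma^{\alpha-4})$ for every $\Omega\in\mathcal P_M$, hence $|\mathbf T_M^\alpha\mu(v/4)-\mathbf T_M^\alpha\mu(-v/4)|^2\lesssim_M\gamma^{2\alpha-8}$, whereas $\mathsf{E}(J,\omega)^2\,\mathrm{P}^\alpha(J,\mu)^2\asymp\gamma^{2\alpha-6}$; so the ratio of the two sides of (\ref{will fail}) is $\gtrsim_M\gamma^2/C_0$, and choosing $\gamma$ large (depending only on $\alpha$, $M$, $C_0$) makes (\ref{will fail}) fail, as asserted — for $\mathbf R^\alpha$ when $M=1$ and for $\mathbf T_M^\alpha$ in general. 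The one genuinely nontrivial ingredient, which I expect to be the main obstacle, is the positivity of those Gamma determinants — equivalently, that $\big((-1)^j\Gamma(z)^2/(\Gamma(z-j)\Gamma(z+j))\big)_{j\ge0}$ really are the Fourier coefficients of a nonnegative measure on the circle. I would try to establish it either by a closed-form, Cauchy-type evaluation of the Toeplitz determinant (using $\Gamma(z)^2/(\Gamma(z-j)\Gamma(z+j))=\prod_{\ell=1}^j\tfrac{z-\ell}{z+\ell-1}$), or by recognizing $\sum_{j\ge0}\tfrac{(a)_j}{(b)_j}e^{2ij\beta}$ with $a=\tfrac{\alpha-1}{2}$, $b=\tfrac{3-\alpha}{2}$ as a nonnegative boundary value of the hypergeometric function ${}_2F_1(a,1;b;\cdot\,)$.
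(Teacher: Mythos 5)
Your argument is structurally the same as the paper's: Taylor-expand each $T_{\Omega }^{\alpha }\mu $ about the center of $J$, reduce the linear term to the Fourier coefficients of the angular part of $\mu $, impose a two-term recursion on those coefficients so that every component of the vector has vanishing derivative in one fixed direction, and then take $\omega $ spread only along that direction, so that the right-hand side of (\ref{will fail}) is of size $O_{M}(\gamma ^{2\alpha -8})$ against a left-hand side $\asymp \gamma ^{2\alpha -6}$, which kills the inequality for large $\gamma $. Your recursion $p_{m}\widehat{\mu }(m-1)+q_{m}\widehat{\mu }(m+1)=0$ is, after rotating the circle measure by $\pi /2$, identical to the condition the paper extracts from (\ref{altogether}), and your coefficients $\widehat{\mu }(2j)=(-1)^{j}\Gamma (z)^{2}/\left( \Gamma (z-j)\Gamma (z+j)\right) $ are exactly the paper's $b_{j}$ up to the sign $(-1)^{j}$. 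The Riesz case $M=1$ is essentially complete in your write-up: your nonnegative combination of the uniform circle measure with antipodal point masses is the paper's convex combination $(1-\lambda )\delta _{0}+\lambda \,d\theta /2\pi $.

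The gap is the positivity. For $M\geq 2$ (and $\alpha \neq 1$) your construction of a positive $\mu $ rests on the assertion that the Toeplitz matrices $\left[ \Gamma (z)^{2}/\left( \Gamma (z-\left\vert i-j\right\vert )\Gamma (z+\left\vert i-j\right\vert )\right) \right] $ --- the matrices $\mathbf{B}_{n}(x)$ of (\ref{def B}) --- are positive semidefinite, and you explicitly defer this (``the main obstacle''), offering only two speculative strategies. But this is precisely the analytic core of Lemma \ref{LRE}: the paper devotes its entire Appendix to it, proving the determinant recursion (\ref{recursion}) via the block determinant formula together with Lemmas \ref{vector} and \ref{vector'} (finite-difference annihilation of degree-$(n-1)$ polynomials plus a residue/partial-fraction evaluation), whence $\det \mathbf{B}_{n}(x)>0$ by induction, and Herglotz's theorem then gives the positive density. (For $1\leq \alpha <2$ your coefficients are nonnegative and the paper's easier route --- convexity of the sequence and positivity of Fej\'{e}r kernels --- would suffice, and for $\alpha =0$ absolute summability does; you invoke neither.) Without this input the truncated moment problem you appeal to need not be solvable by a positive measure, so the second assertion of the lemma is not proved. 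A lesser point: the lemma asks for a single $\mu $, depending only on $\alpha $ and $\gamma $, for which (\ref{will fail}) fails for every $M\geq 1$; your finitely atomic solution depends on $M$, whereas the paper's density $\widetilde{\Psi }$ serves all $M$ at once --- once the full positive definiteness is known you could of course do the same.
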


As a corollary of the proof of this lemma we easily obtain an extension to
higher dimensions by simply embedding an appropriate planar measure into
Euclidean space $\mathbb{R}^{n}$.

\begin{corollary}[of the proof of Lemma \protect\ref{LRE}]
\label{LRE porism}Suppose that$\ J$ is a cube in $\mathbb{R}^{n}$, $0\leq
\alpha <n$, $\gamma >2$ and suppose that $C_{0}>0$ is given. For $\gamma $
sufficiently large, there exists a positive measure $\mu $ on $\mathbb{R}%
^{n} $ supported outside $\gamma J$ and depending only on $n$, $\alpha $ and 
$\gamma $, such that the strong reversal of energy inequality (\ref{will
fail}) \textbf{fails} for any vector $\mathbf{T}^{\alpha }=\left\{ T_{\ell
}^{\alpha }\right\} _{\ell =1}^{N}$ of $\alpha $-fractional smooth Calder%
\'{o}n-Zygmund operators in $\mathbb{R}^{n}$ with kernels $K_{\ell }^{\alpha
}\left( w\right) =\frac{\Omega _{\ell }\left( \frac{w}{\left\vert
w\right\vert }\right) }{\left\vert w\right\vert ^{n-\alpha }}$, where $%
\Omega _{\ell }$ has vanishing integral on every great circle in the sphere $%
\mathbb{S}^{n-1}$ - in particular this holds if each $K_{\ell }^{\alpha }$
is odd.
\end{corollary}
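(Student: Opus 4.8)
The plan is to deduce Corollary \ref{LRE porism} from the planar Lemma \ref{LRE} by transplanting the planar measure onto a coordinate $2$-plane of $\mathbb{R}^{n}$, once one notices that the restriction of each $T_{\ell }^{\alpha }$ to such a plane is again a \emph{planar} classical fractional singular integral, of the shifted order $\beta :=\alpha -\left( n-2\right) \in \left[ 2-n,2\right) $. \emph{Reduction to a plane.} By the translation-covariance of convolution operators and of (\ref{will fail}), translate $J$, $\mu $ and $\omega $ together so that $J$ is centred at the origin; put $\ell =\left\vert J\right\vert ^{1/n}$, so $J=\left[ -\tfrac{\ell }{2},\tfrac{\ell }{2}\right] ^{n}$. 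Let $P=\left\{ x\in \mathbb{R}^{n}:x_{3}=\cdots =x_{n}=0\right\} $, identified with $\mathbb{R}^{2}$, and $J_{0}=J\cap P$, a square in $P$ centred at $0$ with $\left\vert J_{0}\right\vert ^{1/2}=\ell $ and $\gamma J\cap P=\gamma J_{0}$. The unit circle $\mathbb{S}^{n-1}\cap P$ is a great circle of $\mathbb{S}^{n-1}$, so $\int_{\mathbb{S}^{n-1}\cap P}\Omega _{\ell }=0$ for every $\ell $ (in particular when each $K_{\ell }^{\alpha }$ is odd, as then $\Omega _{\ell }$ is odd on $\mathbb{S}^{n-1}$ and hence on the antipodally symmetric set $\mathbb{S}^{n-1}\cap P$); thus $\vartheta _{\ell }:=\Omega _{\ell }\!\mid _{\mathbb{S}^{n-1}\cap P}$ is a smooth function of vanishing mean on the circle. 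Consequently, for any positive measure $\nu $ carried by $P$ and any $x\in P$,
\[
T_{\ell }^{\alpha }\nu \left( x\right) =\int_{P}\frac{\Omega _{\ell }\left( \tfrac{x-y}{\left\vert x-y\right\vert }\right) }{\left\vert x-y\right\vert ^{n-\alpha }}\,d\nu \left( y\right) =T_{\vartheta _{\ell }}^{\beta }\nu \left( x\right) ,
\]
where $T_{\vartheta _{\ell }}^{\beta }$ is the planar convolution operator with kernel $\vartheta _{\ell }\left( \theta \right) /\left\vert w\right\vert ^{2-\beta }$. So, acting on measures carried by $P$ and probed at points of $P$, the vector $\mathbf{T}^{\alpha }$ coincides with the planar vector $\left\{ T_{\vartheta _{\ell }}^{\beta }\right\} _{\ell =1}^{N}$ of classical $\beta $-fractional singular integrals with smooth, mean-zero angular parts.

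\emph{The planar construction.} Next I would invoke the construction underlying the proof of Lemma \ref{LRE}, carried out with the order $\beta $ in place of $\alpha $ and combined with the standard limiting argument (approximate the smooth mean-zero $\vartheta _{\ell }$ by trigonometric polynomials of degree $\leq M$, $M$ large, and absorb the error using that the two supports below are separated and $\mu _{0}$ is evaluated away from the diagonal). This furnishes positive planar measures $\mu _{0}=\mu _{0}\left( n,\alpha ,\gamma ,M\right) $ and $\omega _{0}$ on $P$, with $\omega _{0}$ supported in $J_{0}$, $\limfunc{supp}\mu _{0}\subset P\setminus \gamma J_{0}$, and $\limfunc{dist}\left( \limfunc{supp}\mu _{0},\limfunc{supp}\omega _{0}\right) >0$, for which the planar strong reversal inequality fails for $\left\{ T_{\vartheta _{\ell }}^{\beta }\right\} _{\ell =1}^{N}$, i.e.
\[
\mathsf{E}\left( J_{0},\omega _{0}\right) ^{2}\mathrm{P}^{\beta }\left( J_{0},\mu _{0}\right) ^{2}>C_{0}\sum_{\ell =1}^{N}\mathbb{E}_{J_{0}}^{d\omega _{0}\left( x\right) }\mathbb{E}_{J_{0}}^{d\omega _{0}\left( z\right) }\left\vert T_{\vartheta _{\ell }}^{\beta }\mu _{0}\left( x\right) -T_{\vartheta _{\ell }}^{\beta }\mu _{0}\left( z\right) \right\vert ^{2}.
\]
(In the clean sub-cases — kernels that are odd trigonometric polynomials, or more generally $\Omega _{\ell }\!\mid _{\mathbb{S}^{n-1}\cap P}$ of degree $\leq M$ — no approximation is needed and $\mu _{0},\omega _{0}$ depend only on $n,\alpha ,\gamma ,M$.)

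\emph{Transplant and conclude.} Let $\mu ,\omega $ be the push-forwards of $\mu _{0},\omega _{0}$ under $P\hookrightarrow \mathbb{R}^{n}$. Then $\limfunc{supp}\mu \subset P\setminus \gamma J_{0}\subset \mathbb{R}^{n}\setminus \gamma J$, so $\mu $ is supported outside $\gamma J$. Since $\omega $ is carried by $P$, $c_{J}=0\in P$, and $\left\vert J\right\vert ^{1/n}=\left\vert J_{0}\right\vert ^{1/2}=\ell $, the quantities $\mathsf{E}\left( J,\omega \right) $ and $\mathrm{P}^{\alpha }\left( J,\mu \right) $ are computed entirely from Euclidean distances within $P$ to $0$ and the common scale $\ell $, so they equal their planar values $\mathsf{E}\left( J_{0},\omega _{0}\right) $ and $\mathrm{P}^{\beta }\left( J_{0},\mu _{0}\right) $ (the decay exponent $n+1-\alpha $ of $\mathrm{P}^{\alpha }$ equals the planar exponent $2+1-\beta $). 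Finally, for $x,z\in \limfunc{supp}\omega \subset P$ we have $\left\vert \mathbf{T}^{\alpha }\mu \left( x\right) -\mathbf{T}^{\alpha }\mu \left( z\right) \right\vert ^{2}=\sum_{\ell =1}^{N}\left\vert T_{\vartheta _{\ell }}^{\beta }\mu _{0}\left( x\right) -T_{\vartheta _{\ell }}^{\beta }\mu _{0}\left( z\right) \right\vert ^{2}$ by the first step, so the displayed planar inequality is exactly the negation of (\ref{will fail}) for $\mathbf{T}^{\alpha }$, $\mu $, $\omega $, $J$. Thus (\ref{will fail}) fails, as required.

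The transplantation bookkeeping (the first and last steps: great circles are the sets $\mathbb{S}^{n-1}\cap P$, the exponents match, and both $\mathsf{E}$ and $\mathrm{P}^{\alpha }$ localise to $P$) is routine. The real content — and the step I expect to be the main obstacle — is the planar construction for the \emph{shifted, possibly negative} order $\beta =\alpha -\left( n-2\right) $ while keeping $\mu _{0}$ a genuinely positive measure: one must still be able to choose a positive combination of uniform circle measures whose $\beta $-transforms annihilate (to high enough order) the angular modes on $\limfunc{supp}\omega _{0}$, and this is precisely what the positivity of the determinants $\det \left[ \Gamma \left( z\right) ^{2}/\left( \Gamma \left( z-\left\vert i-j\right\vert \right) \Gamma \left( z+\left\vert i-j\right\vert \right) \right) \right] _{i,j=1}^{n}$, for all $n\geq 1$, is there to deliver.
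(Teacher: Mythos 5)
Your proposal is correct and follows essentially the same route as the paper: the paper likewise transplants the planar construction onto a coordinate $2$-plane of $\mathbb{R}^{n}$, running the computation of Lemma \ref{LRE} with the shifted parameter $x=n-\alpha \in \left( 0,n\right] $ (your $\beta =\alpha -\left( n-2\right) $), restricting $\Omega _{\ell }$ to the great circle $\mathbb{S}^{n-1}\cap P$ and invoking the trigonometric expansion together with the limiting argument. The one ingredient you defer, positivity of the density for the shifted order, is exactly what the paper also defers to: the recursion (\ref{recursion}) gives $\det \mathbf{B}_{m}\left( x\right) >0$ for all $x>0$, hence in particular for $x\in \left( 0,n\right] $.
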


\begin{proof}[Proof of Lemma \protect\ref{LRE} for the Riesz transform vector%
]
Let $\varepsilon >0$. We let $\frac{\Omega _{\ell }\left( \frac{w}{%
\left\vert w\right\vert }\right) }{\left\vert w\right\vert ^{2-\alpha }}$ be
an arbitrary standard kernel for the moment. With $K_{\ell }^{\alpha }\left(
x,y\right) =K_{\ell }^{\alpha }\left( x-y\right) $ we have%
\begin{eqnarray*}
T_{\ell }^{\alpha }\mu \left( x\right)  &=&\int K_{\ell }^{\alpha }\left(
x-y\right) d\mu \left( y\right) =\int \frac{\Omega _{\ell }\left( x-y\right) 
}{\left\vert y-x\right\vert ^{2-\alpha }}d\mu \left( y\right)  \\
&=&\int \left\{ K_{\ell }^{\alpha }\left( c_{J}-y\right) +\left(
x-c_{J}\right) \cdot \nabla K_{\ell }^{\alpha }\left( c_{J}-y\right)
\right\} d\mu \left( y\right) +E_{\ell ,x},
\end{eqnarray*}%
and so%
\begin{eqnarray*}
&&T_{\ell }^{\alpha }\mu \left( x\right) -T_{\ell }^{\alpha }\mu \left(
z\right)  \\
&=&\int \left\{ \left( x-z\right) \cdot \nabla K_{\ell }^{\alpha }\left(
c_{J}-y\right) \right\} d\mu \left( y\right) +\left[ E_{\ell ,x}^{\alpha
}-E_{\ell ,z}^{\alpha }\right]  \\
&\equiv &\Lambda _{\ell }^{\alpha }+\left[ E_{\ell ,x}^{\alpha }-E_{\ell
,z}^{\alpha }\right] ,
\end{eqnarray*}%
where if $\gamma >2$ is sufficiently large, 
\begin{equation}
\left\vert E_{\ell ,x}^{\alpha }-E_{\ell ,z}^{\alpha }\right\vert \leq C%
\frac{1}{\gamma ^{\delta }}\frac{\mathrm{P}^{\alpha }\left( J,\mu \right) }{%
\left\vert J\right\vert ^{\frac{1}{2}}}\left\vert x-z\right\vert \leq
\varepsilon \frac{\mathrm{P}^{\alpha }\left( J,\mu \right) }{\left\vert
J\right\vert ^{\frac{1}{2}}}\left\vert x-z\right\vert .  \label{er est''}
\end{equation}

The point of this inequality (\ref{er est''}) is that it permits the
replacement of the difference $T_{\ell }^{\alpha }\mu \left( x\right)
-T_{\ell }^{\alpha }\mu \left( z\right) $ in (\ref{will fail}) by the linear
part $\Lambda _{\ell }^{\alpha }$ of the Taylor expansion of the kernel $%
K_{\ell }^{\alpha }$.

Now we make the choice%
\begin{eqnarray*}
\Omega _{\ell }\left( w\right)  &=&\Omega \left( \theta _{\ell }\left(
w\right) \right) ; \\
\theta _{\ell }\left( w\right)  &\equiv &\tan ^{-1}\frac{\left( -1\right)
^{\ell ^{\prime }}w^{\ell ^{\prime }}}{w^{\ell }},\ \ \ \ \ 1\leq \ell \leq
2,
\end{eqnarray*}%
where $w^{\ell ^{\prime }}$ denotes the coordinate variable other than $%
w^{\ell }$, i.e. $\ell +\ell ^{\prime }=3$. Thus $\theta _{1}$ is the usual
angular coordinate on the circle and $\theta _{2}=\theta _{1}+\frac{\pi }{2}$%
. We now use%
\begin{eqnarray*}
\nabla \left\vert w\right\vert ^{\alpha -2} &=&\left( \frac{\partial }{%
\partial w^{1}}\left( \left( w^{1}\right) ^{2}+\left( w^{2}\right)
^{2}\right) ^{\frac{\alpha -2}{2}},\frac{\partial }{\partial w^{2}}\left(
\left( w^{1}\right) ^{2}+\left( w^{2}\right) ^{2}\right) ^{\frac{\alpha -2}{2%
}}\right)  \\
&=&\frac{\alpha -2}{2}\left( \left( w^{1}\right) ^{2}+\left( w^{2}\right)
^{2}\right) ^{\frac{\alpha -2}{2}-1}2w \\
&=&\left( \alpha -2\right) \left\vert w\right\vert ^{\alpha -4}w.
\end{eqnarray*}%
and%
\begin{eqnarray*}
\frac{\partial }{\partial w^{\ell }}\tan ^{-1}\frac{w^{\ell ^{\prime }}}{%
w^{\ell }} &=&\frac{1}{1+\left( \frac{w^{\ell ^{\prime }}}{w^{\ell }}\right)
^{2}}\frac{-w^{\ell ^{\prime }}}{\left( w^{\ell }\right) ^{2}}=\frac{%
-w^{\ell ^{\prime }}}{\left\vert w\right\vert ^{2}}, \\
\frac{\partial }{\partial w^{\ell ^{\prime }}}\tan ^{-1}\frac{w^{\ell
^{\prime }}}{w^{\ell }} &=&\frac{1}{1+\left( \frac{w^{\ell ^{\prime }}}{%
w^{\ell }}\right) ^{2}}\frac{1}{w^{\ell }}=\frac{w^{\ell }}{\left\vert
w\right\vert ^{2}}.
\end{eqnarray*}%
to calculate that the gradient of the convolution kernel%
\begin{equation*}
K_{\ell }^{\alpha }\left( w\right) =\frac{\Omega _{\ell }\left( w\right) }{%
\left\vert w\right\vert ^{2-\alpha }}=\frac{\Omega \left( \theta _{\ell
}\left( w\right) \right) }{\left\vert w\right\vert ^{2-\alpha }}=\frac{%
\Omega \left( \tan ^{-1}\frac{w^{\ell ^{\prime }}}{w^{\ell }}\right) }{%
\left\vert w\right\vert ^{2-\alpha }},
\end{equation*}%
is given by, 
\begin{eqnarray*}
\nabla K_{\ell }^{\alpha }\left( w\right)  &=&\nabla \left( \frac{\Omega
_{\ell }\left( w\right) }{\left\vert w\right\vert ^{2-\alpha }}\right)
=\Omega \left( \theta _{\ell }\left( w\right) \right) \nabla \left\vert
w\right\vert ^{\alpha -2}+\left\vert w\right\vert ^{\alpha -2}\Omega
^{\prime }\left( \theta _{\ell }\left( w\right) \right) \nabla \theta _{\ell
} \\
&=&\frac{\left( \alpha -2\right) \Omega \left( \theta _{\ell }\left(
w\right) \right) \ w+\Omega ^{\prime }\left( \theta _{\ell }\left( w\right)
\right) \ w^{\perp }}{\left\vert w\right\vert ^{4-\alpha }}.
\end{eqnarray*}%
Thus the linear part $\Lambda _{\ell }^{\alpha }$ in the Taylor expansion of 
$T_{\ell }^{\alpha }\mu $ is given by%
\begin{equation*}
\Lambda _{\ell }^{\alpha }=\left( x-z\right) \cdot \int \nabla K_{\ell
}^{\alpha }\left( c_{J}-y\right) d\mu \left( y\right) \equiv \left(
x-z\right) \cdot \mathbf{Z}_{\Omega _{\ell }}^{\alpha }\left( c_{J};\mu
\right) ,
\end{equation*}%
where 
\begin{eqnarray*}
\mathbf{Z}_{\Omega _{\ell }}^{\alpha }\left( c_{J};\mu \right)  &=&\int_{%
\mathbb{R}^{2}}\frac{\left( \alpha -2\right) \Omega \left( \theta _{\ell
}\left( c_{J}-y\right) \right) \ \left( c_{J}\mathbf{-}y\right) +\Omega
^{\prime }\left( \theta _{\ell }\left( c_{J}-y\right) \right) \ \left( c_{J}%
\mathbf{-}y\right) ^{\perp }}{\left\vert c_{J}-y\right\vert ^{4-\alpha }}%
d\mu \left( y\right)  \\
&=&\int_{w\in \mathbb{S}^{1}}\left\{ \left( \alpha -2\right) \Omega \left(
\theta _{\ell }\left( w\right) \right) w^{1}-\Omega ^{\prime }\left( \theta
_{\ell }\left( w\right) \right) w^{2}\right\} \mathbf{e}^{1}d\Psi _{\mu
}\left( w\right)  \\
&&+\int_{w\in \mathbb{S}^{1}}\left\{ \left( \alpha -2\right) \Omega \left(
\theta _{\ell }\left( w\right) \right) w^{2}+\Omega ^{\prime }\left( \theta
_{\ell }\left( w\right) \right) w^{1}\right\} \mathbf{e}^{2}d\Psi _{\mu
}\left( w\right) ,
\end{eqnarray*}%
and $\mathbf{e}^{\ell }$ is the coordinate vector with a\ $1$ in the $\ell
^{th}$ position. Here the measure $\Psi _{\mu }$ is an essentially \emph{%
arbitrary} positive finite measure on the circle $\mathbb{S}^{1}$ given
formally by%
\begin{equation*}
d\Psi _{\mu }\left( w\right) =\int_{0}^{\infty }r^{\alpha -3}d\mu _{w}\left(
r\right) =\int_{0}^{\infty }r^{\alpha -3}d\mu \left( rw\right) ,\ \ \ \ \
w\in \mathbb{S}^{1}.
\end{equation*}

We use,%
\begin{eqnarray*}
\tan \theta _{\ell }\left( w\right)  &=&\frac{\left( -1\right) ^{\ell
^{\prime }}w^{\ell ^{\prime }}}{w^{\ell }}, \\
\csc \theta _{\ell }\left( w\right)  &=&\left( -1\right) ^{\ell ^{\prime }}%
\sqrt{1+\cot ^{2}\theta _{\ell }\left( w\right) }=\left( -1\right) ^{\ell
^{\prime }}\sqrt{1+\left( \frac{w^{\ell }}{w^{\ell ^{\prime }}}\right) ^{2}}=%
\frac{\left\vert w\right\vert }{\left( -1\right) ^{\ell ^{\prime }}w^{\ell
^{\prime }}}, \\
\sin \theta _{\ell }\left( w\right)  &=&\frac{\left( -1\right) ^{\ell
^{\prime }}w^{\ell ^{\prime }}}{\left\vert w\right\vert }\text{ and }\cos
\theta _{\ell }\left( w\right) =\frac{w^{\ell }}{\left\vert w\right\vert },
\end{eqnarray*}%
for $w\neq 0$, to obtain%
\begin{eqnarray*}
\mathbf{Z}_{\Omega _{1}}^{\alpha }\left( c_{J};\mu \right)  &=&\int_{\mathbb{%
S}^{1}}\left\{ \left( \alpha -2\right) \Omega \left( \theta _{1}\left(
w\right) \right) \cos \theta _{1}\left( w\right) -\Omega ^{\prime }\left(
\theta _{1}\left( w\right) \right) \sin \theta _{1}\left( w\right) \right\} 
\mathbf{e}^{1}d\Psi _{\mu }\  \\
&&+\int_{\mathbb{S}^{1}}\int \left\{ \left( \alpha -2\right) \Omega \left(
\theta _{1}\left( w\right) \right) \sin \theta _{1}\left( w\right) +\Omega
^{\prime }\left( \theta _{1}\left( w\right) \right) \cos \theta _{1}\left(
w\right) \right\} \mathbf{e}^{2}d\Psi _{\mu } \\
&\equiv &\int_{\mathbb{S}^{1}}\left\{ A_{\alpha }^{1}\left( \theta
_{1}\left( w\right) \right) \mathbf{e}^{1}+B_{\alpha }^{1}\left( \theta
_{1}\left( w\right) \right) \mathbf{e}^{2}\right\} d\Psi _{\mu }\ ,
\end{eqnarray*}%
and%
\begin{eqnarray*}
\mathbf{Z}_{\Omega _{2}}^{\alpha }\left( c_{J};\mu \right)  &=&\int_{\mathbb{%
S}^{1}}\left\{ -\left( \alpha -2\right) \Omega \left( \theta _{2}\left(
w\right) \right) \sin \theta _{2}\left( w\right) -\Omega ^{\prime }\left(
\theta _{2}\left( w\right) \right) \cos \theta _{2}\left( w\right) \right\} 
\mathbf{e}^{1}d\Psi _{\mu }\  \\
&&+\int_{\mathbb{S}^{1}}\int \left\{ \left( \alpha -2\right) \Omega \left(
\theta _{2}\left( w\right) \right) \cos \theta _{2}\left( w\right) -\Omega
^{\prime }\left( \theta _{2}\left( w\right) \right) \sin \theta _{2}\left(
w\right) \right\} \mathbf{e}^{2}d\Psi _{\mu } \\
&\equiv &\int_{\mathbb{S}^{1}}\left\{ A_{\alpha }^{2}\left( \theta
_{2}\left( w\right) \right) \mathbf{e}^{1}+B_{\alpha }^{2}\left( \theta
_{2}\left( w\right) \right) \mathbf{e}^{2}\right\} d\Psi _{\mu }\ ,
\end{eqnarray*}%
with%
\begin{eqnarray}
A_{\alpha }^{1}\left( t\right)  &=&\left( \alpha -2\right) \Omega \left(
t\right) \cos t-\Omega ^{\prime }\left( t\right) \sin t=B_{\alpha
}^{2}\left( t\right) ,  \label{def A and B} \\
B_{\alpha }^{1}\left( t\right)  &=&\left( \alpha -2\right) \Omega \left(
t\right) \sin t+\Omega ^{\prime }\left( t\right) \cos t=-A_{\alpha
}^{2}\left( t\right) .  \notag
\end{eqnarray}

Now we show below in (\ref{strong fails}) that a necessary condition for
reversal of energy on $J$ is that the span of the pair of vectors $\left\{ 
\mathbf{Z}_{\Omega _{\ell }}^{\alpha }\left( c_{J};\mu \right) \right\}
_{\ell =1}^{2}$ is all of $\mathbb{R}^{2}$: 
\begin{equation}
\limfunc{Span}\left\{ \mathbf{Z}_{\Omega _{\ell }}^{\alpha }\left( c_{J};\mu
\right) \right\} _{\ell =1}^{2}=\mathbb{R}^{2}.  \label{choice of points}
\end{equation}%
So it suffices to show the failure of (\ref{choice of points}), i.e. that $%
\mathbf{Z}_{\Omega _{1}}^{\alpha }\left( c_{J};\mu \right) $ and $\mathbf{Z}%
_{\Omega _{2}}^{\alpha }\left( c_{J};\mu \right) $ are parallel.

At this point we take $\ell =1$ and set $\theta =\theta _{1}\left( w\right) $
so that we obtain%
\begin{eqnarray}
A_{\alpha }\left( \theta \right) &\equiv &A_{\alpha }^{1}\left( \theta
_{1}\left( w\right) \right) =\left( \alpha -2\right) \Omega \left( \theta
\right) \cos \theta -\Omega ^{\prime }\left( \theta \right) \sin \theta ,
\label{def A and B with theta 1} \\
B_{\alpha }\left( \theta \right) &\equiv &B_{\alpha }^{1}\left( \theta
_{1}\left( w\right) \right) =\left( \alpha -2\right) \Omega \left( \theta
\right) \sin \theta +\Omega ^{\prime }\left( \theta \right) \cos \theta . 
\notag
\end{eqnarray}%
In the case $\alpha =1$ these coefficients are perfect derivatives,%
\begin{eqnarray*}
A_{1}\left( \theta \right) &=&-\Omega \left( \theta \right) \cos \theta
-\Omega ^{\prime }\left( \theta \right) \sin \theta =-\left[ \Omega \left(
\theta \right) \sin \theta \right] ^{\prime }, \\
B_{1}\left( \theta \right) &=&-\Omega \left( \theta \right) \sin \theta
+\Omega ^{\prime }\left( \theta \right) \cos \theta =-\left[ \Omega \left(
\theta \right) \cos \theta \right] ^{\prime },
\end{eqnarray*}%
and so have vanishing integral on the circle. Thus with the choice $d\Psi
_{\mu }\left( \theta \right) =d\theta $ we have%
\begin{equation*}
\mathbf{Z}_{\Omega }\left( c_{J};\mu \right) =\int_{\mathbb{S}^{1}}\left\{
A_{1}\left( \theta \right) \mathbf{e}^{1}+B_{1}\left( \theta \right) \mathbf{%
e}^{2}\right\} d\theta =\mathbf{0}
\end{equation*}%
the zero vector, for \textbf{every} choice of differentiable $\Omega $ on
the circle\textbf{.}

In the case $0\leq \alpha <2$ with $\alpha \neq 1$, it is no longer possible
to find a nontrivial measure $\mu $ so that $\mathbf{Z}_{\Omega }^{\alpha
}\left( c_{J};\mu \right) $ vanishes for all differentiable $\Omega $, but
we will see that we \emph{can} always find a positive measure $\mu $ such
that the vectors $\mathbf{Z}_{\Omega _{1}}^{\alpha }\left( c_{J};\mu \right) 
$ and $\mathbf{Z}_{\Omega _{2}}^{\alpha }\left( c_{J};\mu \right) $ are
parallel for the choice $\Omega \left( \theta \right) =\cos \theta $ that
corresponds to the vector of Riesz transforms.

Indeed, in the special case that $\Omega \left( t\right) =\cos t$, and
recalling that $\theta _{2}\left( w\right) =\theta _{1}\left( w\right) +%
\frac{\pi }{2}=\theta +\frac{\pi }{2}$, we have%
\begin{eqnarray*}
A_{\alpha }^{1}\left( \theta _{1}\left( w\right) \right)  &=&A_{\alpha
}^{1}\left( \theta \right) =\left( \alpha -2\right) \cos ^{2}\theta +\sin
^{2}\theta ; \\
B_{\alpha }^{1}\left( \theta _{1}\left( w\right) \right)  &=&B_{\alpha
}^{1}\left( \theta \right) =\left( \alpha -3\right) \cos \theta \sin \theta ;
\\
A_{\alpha }^{2}\left( \theta _{2}\left( w\right) \right)  &=&-B_{\alpha
}^{1}\left( \theta +\frac{\pi }{2}\right) =-\left( \alpha -3\right) \cos
\left( \theta +\frac{\pi }{2}\right) \sin \left( \theta +\frac{\pi }{2}%
\right)  \\
&=&\left( \alpha -3\right) \cos \theta \sin \theta ; \\
B_{\alpha }^{2}\left( \theta _{2}\left( w\right) \right)  &=&A_{\alpha
}^{1}\left( \theta +\frac{\pi }{2}\right) =\left( \alpha -2\right) \cos
^{2}\left( \theta +\frac{\pi }{2}\right) +\sin ^{2}\left( \theta +\frac{\pi 
}{2}\right)  \\
&=&\left( \alpha -2\right) \sin ^{2}\theta +\cos ^{2}\theta .
\end{eqnarray*}%
Thus we also have%
\begin{eqnarray*}
\mathbf{Z}_{\Omega _{1}}^{\alpha }\left( c_{J};\mu \right)  &=&\int_{\mathbb{%
S}^{1}}\left\{ A_{\alpha }^{1}\left( \theta _{1}\left( w\right) \right) 
\mathbf{e}^{1}+B_{\alpha }^{1}\left( \theta _{1}\left( w\right) \right) 
\mathbf{e}^{2}\right\} d\Psi _{\mu } \\
&=&\int_{\mathbb{S}^{1}}\left\{ \left[ \left( \alpha -2\right) \cos
^{2}\theta +\sin ^{2}\theta \right] \mathbf{e}^{1}+\left[ \left( \alpha
-3\right) \cos \theta \sin \theta \right] \mathbf{e}^{2}\right\} d\Psi _{\mu
} \\
&=&\left\{ \int_{\mathbb{S}^{1}}\left[ \left( \alpha -2\right) \cos
^{2}\theta +\sin ^{2}\theta \right] d\Psi _{\mu }\right\} \mathbf{e}%
^{1}+\left\{ \int_{\mathbb{S}^{1}}\left[ \left( \alpha -3\right) \cos \theta
\sin \theta \right] d\Psi _{\mu }\right\} \mathbf{e}^{2}
\end{eqnarray*}%
and 
\begin{eqnarray*}
\mathbf{Z}_{\Omega _{2}}^{\alpha }\left( c_{J};\mu \right)  &=&\int_{\mathbb{%
S}^{1}}\left\{ A_{\alpha }^{2}\left( \theta _{2}\left( w\right) \right) 
\mathbf{e}^{1}+B_{\alpha }^{2}\left( \theta _{2}\left( w\right) \right) 
\mathbf{e}^{2}\right\} d\Psi _{\mu } \\
&=&\int_{\mathbb{S}^{1}}\left\{ \left[ \left( \alpha -3\right) \cos \theta
\sin \theta \right] \mathbf{e}^{1}+\left[ \left( \alpha -2\right) \sin
^{2}\theta +\cos ^{2}\theta \right] \mathbf{e}^{2}\right\} d\Psi _{\mu } \\
&=&\left\{ \int_{\mathbb{S}^{1}}\left[ \left( \alpha -3\right) \cos \theta
\sin \theta \right] d\Psi _{\mu }\right\} \mathbf{e}^{1}+\left\{ \int_{%
\mathbb{S}^{1}}\left[ \left( \alpha -2\right) \sin ^{2}\theta +\cos
^{2}\theta \right] d\Psi _{\mu }\right\} \mathbf{e}^{2}.
\end{eqnarray*}%
Using%
\begin{eqnarray}
&&\left( \alpha -2\right) \cos ^{2}\theta +\sin ^{2}\theta =\left( \alpha
-3\right) \cos ^{2}\theta +1,  \label{identities} \\
&&\left( \alpha -2\right) \sin ^{2}\theta +\cos ^{2}\theta =\left( \alpha
-3\right) \sin ^{2}\theta +1,  \notag \\
&&\sin \theta \cos \theta =\frac{1}{2}\sin 2\theta ,\ \cos ^{2}\theta =\frac{%
1+\cos 2\theta }{2},\ \sin ^{2}\theta =\frac{1-\cos 2\theta }{2},  \notag
\end{eqnarray}%
we see that%
\begin{eqnarray*}
\left( \alpha -2\right) \cos ^{2}\theta +\sin ^{2}\theta  &=&\left( \alpha
-3\right) \frac{1+\cos 2\theta }{2}+1=\frac{\alpha -3}{2}\cos 2\theta +\frac{%
\alpha -1}{2}, \\
\left( \alpha -2\right) \sin ^{2}\theta +\cos ^{2}\theta  &=&\left( \alpha
-3\right) \frac{1-\cos 2\theta }{2}+1=-\frac{\alpha -3}{2}\cos 2\theta +%
\frac{\alpha -1}{2}, \\
\left( \alpha -3\right) \cos \theta \sin \theta  &=&\frac{\alpha -3}{2}\sin
2\theta .
\end{eqnarray*}%
Plugging these formulas into those for $\mathbf{Z}_{\Omega _{1}}^{\alpha
}\left( c_{J};\mu \right) $ and $\mathbf{Z}_{\Omega _{2}}^{\alpha }\left(
c_{J};\mu \right) $ we obtain 
\begin{eqnarray*}
&&\det \left[ 
\begin{array}{c}
\mathbf{Z}_{\Omega _{1}}^{\alpha }\left( c_{J};\mu \right)  \\ 
\mathbf{Z}_{\Omega _{2}}^{\alpha }\left( c_{J};\mu \right) 
\end{array}%
\right]  \\
&=&\det \left[ 
\begin{array}{cc}
\int_{\mathbb{S}^{1}}\left[ \frac{\alpha -3}{2}\cos 2\theta +\frac{\alpha -1%
}{2}\right] d\Psi _{\mu } & \int_{\mathbb{S}^{1}}\left[ \frac{\alpha -3}{2}%
\sin 2\theta \right] d\Psi _{\mu } \\ 
\int_{\mathbb{S}^{1}}\left[ \frac{\alpha -3}{2}\sin 2\theta \right] d\Psi
_{\mu } & \int_{\mathbb{S}^{1}}\left[ -\frac{\alpha -3}{2}\cos 2\theta +%
\frac{\alpha -1}{2}\right] d\Psi _{\mu }%
\end{array}%
\right]  \\
&=&\left( \frac{\alpha -3}{2}\int_{\mathbb{S}^{1}}\cos 2\theta d\Psi _{\mu }+%
\frac{\alpha -1}{2}\left\Vert \Psi _{\mu }\right\Vert \right) \left( -\frac{%
\alpha -3}{2}\int_{\mathbb{S}^{1}}\cos 2\theta d\Psi _{\mu }+\frac{\alpha -1%
}{2}\left\Vert \Psi _{\mu }\right\Vert \right)  \\
&&-\left( \frac{\alpha -3}{2}\int_{\mathbb{S}^{1}}\sin 2\theta d\Psi _{\mu
}\right) ^{2} \\
&=&\left( \frac{\alpha -1}{2}\left\Vert \Psi _{\mu }\right\Vert \right)
^{2}-\left\{ \left( \frac{\alpha -3}{2}\int_{\mathbb{S}^{1}}\cos 2\theta
d\Psi _{\mu }\right) ^{2}+\left( \frac{\alpha -3}{2}\int_{\mathbb{S}%
^{1}}\sin 2\theta d\Psi _{\mu }\right) ^{2}\right\} .
\end{eqnarray*}%
Thus $\det \left[ 
\begin{array}{c}
\mathbf{Z}_{\Omega _{1}}^{\alpha }\left( c_{J};\mu \right)  \\ 
\mathbf{Z}_{\Omega _{2}}^{\alpha }\left( c_{J};\mu \right) 
\end{array}%
\right] =0$ if and only if the length of the vector%
\begin{equation*}
\frac{\alpha -3}{2}\left( 
\begin{array}{c}
\int_{\mathbb{S}^{1}}\cos 2\theta d\Psi _{\mu } \\ 
\int_{\mathbb{S}^{1}}\sin 2\theta d\Psi _{\mu }%
\end{array}%
\right) 
\end{equation*}%
equals $\frac{\left\vert \alpha -1\right\vert }{2}\left\Vert \Psi _{\mu
}\right\Vert $, i.e.%
\begin{equation}
\left\Vert \left( 
\begin{array}{c}
\int_{\mathbb{S}^{1}}\cos 2\theta d\Psi _{\mu } \\ 
\int_{\mathbb{S}^{1}}\sin 2\theta d\Psi _{\mu }%
\end{array}%
\right) \right\Vert =\frac{\left\vert \alpha -1\right\vert }{\left\vert
\alpha -3\right\vert }\left\Vert \Psi _{\mu }\right\Vert \ .
\label{the condition}
\end{equation}

To construct a positive probability measure $d\Psi _{\mu }$ on the circle
that satisfies (\ref{the condition}), we first observe that if $d\Psi _{\mu
}=\delta _{0}$ is the unit point mass at $0$, then%
\begin{equation*}
\left\Vert \left( 
\begin{array}{c}
\int_{\mathbb{S}^{1}}\cos 2\theta d\Psi _{\mu } \\ 
\int_{\mathbb{S}^{1}}\sin 2\theta d\Psi _{\mu }%
\end{array}%
\right) \right\Vert =\left\Vert \left( 
\begin{array}{c}
\int_{\mathbb{S}^{1}}d\Psi _{\mu } \\ 
0%
\end{array}%
\right) \right\Vert =\left\Vert \Psi _{\mu }\right\Vert ,
\end{equation*}%
and since $\left\vert \alpha -1\right\vert <\left\vert \alpha -3\right\vert $
for all $0\leq \alpha <2$, we have%
\begin{equation*}
\left\Vert \left( 
\begin{array}{c}
\int_{\mathbb{S}^{1}}\cos 2\theta d\Psi _{\mu } \\ 
\int_{\mathbb{S}^{1}}\sin 2\theta d\Psi _{\mu }%
\end{array}%
\right) \right\Vert >\frac{\left\vert \alpha -1\right\vert }{\left\vert
\alpha -3\right\vert }\left\Vert \Psi _{\mu }\right\Vert ,
\end{equation*}%
in this case. On the other hand, if $d\Psi _{\mu }\left( \theta \right) =%
\frac{1}{2\pi }d\theta $ is normalized Lebesgue measure on the circle, we
have%
\begin{equation*}
\left\Vert \left( 
\begin{array}{c}
\int_{\mathbb{S}^{1}}\cos 2\theta d\Psi _{\mu } \\ 
\int_{\mathbb{S}^{1}}\sin 2\theta d\Psi _{\mu }%
\end{array}%
\right) \right\Vert =\left\Vert \left( 
\begin{array}{c}
0 \\ 
0%
\end{array}%
\right) \right\Vert =0<\frac{\left\vert \alpha -1\right\vert }{\left\vert
\alpha -3\right\vert }\left\Vert \Psi _{\mu }\right\Vert .
\end{equation*}
It is now easy to see that there is a convex combination $d\Psi _{\mu
}=\left( 1-\lambda \right) \delta _{0}+\lambda \frac{1}{2\pi }d\theta $ such
that (\ref{the condition}) holds. Thus (\ref{choice of points}) fails, and
we now show that energy reversal fails.

In fact, we may assume that both $\mathbf{Z}_{\Omega _{1}}^{\alpha }\left(
c_{J};\mu \right) $ and $\mathbf{Z}_{\Omega _{2}}^{\alpha }\left( c_{J};\mu
\right) $ are parallel to the coordinate vector $\mathbf{e}_{2}$, and in
this case we will see that we can reverse at most the coordinate energy $%
\mathsf{E}^{2}\left( J,\omega \right) $, defined above by%
\begin{equation*}
\mathsf{E}^{2}\left( J,\omega \right) ^{2}\equiv \frac{1}{\left\vert
J\right\vert _{\omega }}\frac{1}{\left\vert J\right\vert _{\omega }}%
\int_{J}\int_{J}\left\vert \frac{x^{2}-z^{2}}{\left\vert J\right\vert ^{%
\frac{1}{n}}}\right\vert ^{2}d\omega \left( x\right) d\omega \left( z\right)
,
\end{equation*}%
and not the full energy $\mathsf{E}\left( J,\omega \right) $. More
precisely, we claim that there is a measure $\omega $ such that for $\gamma $
so large that $\varepsilon \ll C_{0}$, the strong reversal of $\omega $%
-energy inequality (\ref{will fail}) fails. Indeed, using that $\mathbf{Z}%
_{\Omega _{\ell }}^{\alpha }\left( c_{J}\right) \left( c_{J}\right) $ is
parallel to $\mathbf{e}^{2}$, we have that%
\begin{eqnarray}
&&\int_{J}\int_{J}\left\vert \mathbf{T}^{\alpha }\mu \left( x\right) -%
\mathbf{T}^{\alpha }\mu \left( z\right) \right\vert ^{2}d\omega \left(
x\right) d\omega \left( z\right)   \label{strong fails} \\
&=&\sum_{\ell =1}^{2}\int_{J}\int_{J}\left\vert \left( x-z\right) \cdot 
\mathbf{Z}_{\Omega _{\ell }}^{\alpha }\left( c_{J}\right) +\left[ E_{\ell
,x}^{\alpha }-E_{\ell ,z}^{\alpha }\right] \right\vert ^{2}d\omega \left(
x\right) d\omega \left( z\right)   \notag \\
&\leq &\sum_{\ell =1}^{2}\int_{J}\int_{J}\left\vert \frac{\mathrm{P}^{\alpha
}\left( J,\mu \right) }{\left\vert J\right\vert ^{\frac{1}{2}}}\left(
x-z\right) \cdot \frac{\mathbf{Z}_{\Omega _{\ell }}^{\alpha }\left(
c_{J}\right) \left( c_{J}\right) }{\left\vert \mathbf{Z}_{\Omega _{\ell
}}^{\alpha }\left( c_{J}\right) \left( c_{J}\right) \right\vert }\right\vert
^{2}d\omega \left( x\right) d\omega \left( z\right)   \notag \\
&&+C\sum_{\ell =1}^{2}\int_{J}\int_{J}\left\vert \varepsilon \frac{\mathrm{P}%
^{\alpha }\left( J,\mu \right) }{\left\vert J\right\vert ^{\frac{1}{2}}}%
\left\vert x-z\right\vert \right\vert ^{2}d\omega \left( x\right) d\omega
\left( z\right)   \notag \\
&\leq &\mathsf{E}^{2}\left( J,\omega \right) ^{2}\mathrm{P}^{\alpha }\left(
J,\mu \right) ^{2}+C\varepsilon ^{2}\mathsf{E}\left( J,\omega \right) ^{2}%
\mathrm{P}^{\alpha }\left( J,\mu \right) ^{2}  \notag \\
&\leq &\frac{1}{10}C_{0}\mathsf{E}\left( J,\omega \right) ^{2}\mathrm{P}%
^{\alpha }\left( J,\mu \right) ^{2},  \notag
\end{eqnarray}%
provided we choose $\gamma $ so large that $C\varepsilon ^{2}\leq \frac{1}{10%
}C_{0}$ and provided we choose $\omega $ so that $\mathsf{E}^{2}\left(
J,\omega \right) =0$ but $\mathsf{E}\left( J,\omega \right) >0$. This
completes the proof of the first assertion in Lemma \ref{LRE}.
\end{proof}

\begin{remark}
The condition (\ref{the condition}) must be invariant under rotations, i.e.
invariant under replacing $\theta $ by $\theta -\phi $ for any constant $%
\phi $, and this is easily seen using (\ref{identities}) above:%
\begin{eqnarray*}
\left( 
\begin{array}{c}
\int_{\mathbb{S}^{1}}\cos 2\left( \theta -\phi \right) d\Psi _{\mu } \\ 
\int_{\mathbb{S}^{1}}\sin 2\left( \theta -\phi \right) d\Psi _{\mu }%
\end{array}%
\right) &=&\left( 
\begin{array}{c}
\cos 2\phi \int_{\mathbb{S}^{1}}\cos 2\theta d\Psi _{\mu }+\sin 2\phi \int_{%
\mathbb{S}^{1}}\sin 2\theta d\Psi _{\mu } \\ 
\cos 2\phi \int_{\mathbb{S}^{1}}\sin 2\theta d\Psi _{\mu }-\sin 2\phi \int_{%
\mathbb{S}^{1}}\cos 2\theta d\Psi _{\mu }%
\end{array}%
\right) \\
&=&\cos 2\phi \left( 
\begin{array}{c}
\int_{\mathbb{S}^{1}}\cos 2\theta d\Psi _{\mu } \\ 
\int_{\mathbb{S}^{1}}\sin 2\theta d\Psi _{\mu }%
\end{array}%
\right) -\sin 2\phi \left( 
\begin{array}{c}
\int_{\mathbb{S}^{1}}\cos 2\theta d\Psi _{\mu } \\ 
\int_{\mathbb{S}^{1}}\sin 2\theta d\Psi _{\mu }%
\end{array}%
\right) ^{\perp },
\end{eqnarray*}%
which has length independent of $\phi $.
\end{remark}

\begin{remark}
The above proof shows that for each $t\in \mathbb{R}$, the convolution
kernel 
\begin{equation*}
\Phi _{\alpha ,t}\left( x,y\right) =\frac{x\cos t+y\sin t}{\left(
x^{2}+y^{2}\right) ^{\frac{3-\alpha }{2}}},
\end{equation*}%
in the plane with coordinates $\left( x,y\right) $, $x,y\in \mathbb{R}$, and
the probability measure $d\mu _{\alpha }$ supported on the circle $\mathbb{S}%
^{1}=\left[ 0,2\pi \right) $ given by%
\begin{equation*}
d\mu _{\alpha }\left( \theta \right) =\frac{\left\vert \alpha -1\right\vert 
}{\left\vert \alpha -3\right\vert }\delta _{0}\left( \theta \right) +\frac{%
\left\vert \alpha -3\right\vert -\left\vert \alpha -1\right\vert }{%
\left\vert \alpha -3\right\vert }\frac{d\theta }{2\pi },
\end{equation*}%
satisfy the property that $\func{grad}\left( \Phi _{\alpha ,t}\ast \mu
_{\alpha }\right) \left( 0,0\right) $ points in the same direction for all $%
t $. A direct calculation shows that 
\begin{equation*}
\func{grad}\left( \Phi _{\alpha ,t}\ast \mu _{\alpha }\right) \left(
0,0\right) =\left( \alpha -1\right) \left\{ 
\begin{array}{ccc}
\left[ \cos t,0\right] & \text{ for } & 0\leq \alpha <1 \\ 
\left[ 0,\sin t\right] & \text{ for } & 1<\alpha <2%
\end{array}%
\right. .
\end{equation*}%
Indeed, if for $\theta \in \mathbb{R}$ we define $\Phi _{\alpha ,t}^{\theta
} $ to be the convolution of $\Phi _{\alpha ,t}$ with the unit point mass $%
\delta _{e^{i\theta }}$ at $e^{i\theta }$ in the circle, 
\begin{equation*}
\Phi _{\alpha ,t}^{\theta }\left( x,y\right) \equiv \left( \Phi _{\alpha
,t}\ast \delta _{e^{i\theta }}\right) \left( x,y\right) =\frac{\left( x-\cos
\theta \right) \cos t+\left( y-\sin \theta \right) \sin t}{\left( \left(
x-\cos \theta \right) ^{2}+\left( y-\sin \theta \right) ^{2}\right) ^{\frac{%
3-\alpha }{2}}},
\end{equation*}%
then we have%
\begin{eqnarray*}
\func{grad}\Phi _{\alpha ,t}^{\theta }\left( x,y\right) &=&\left[ \left( 
\frac{\partial }{\partial x}\Phi _{\alpha ,t}^{\theta }\right) \left(
x,y\right) ,\left( \frac{\partial }{\partial y}\Phi _{\alpha ,t}^{\theta
}\right) \left( x,y\right) \right] \\
&=&\frac{\left[ \cos t,\sin t\right] }{\left( \left( x-\cos \theta \right)
^{2}+\left( y-\sin \theta \right) ^{2}\right) ^{\frac{3-\alpha }{2}}} \\
&&-\frac{3-\alpha }{2}\left\{ \left( x-\cos \theta \right) \cos t+\left(
y-\sin \theta \right) \sin t\right\} \frac{\left[ 2\left( x-\cos \theta
\right) ,2\left( y-\sin \theta \right) \right] }{\left( \left( x-\cos \theta
\right) ^{2}+\left( y-\sin \theta \right) ^{2}\right) ^{\frac{5-\alpha }{2}}}%
,
\end{eqnarray*}%
and when $\left( x,y\right) =\left( 0,0\right) $ we get%
\begin{equation*}
\func{grad}\Phi _{\alpha ,t}^{\theta }\left( 0,0\right) =\left[ \cos t,\sin t%
\right] -\left( 3-\alpha \right) \left\{ \cos \theta \cos t+\sin \theta \sin
t\right\} \left[ \cos \theta ,\sin \theta \right] .
\end{equation*}%
Thus we have%
\begin{eqnarray*}
\func{grad}\Phi _{t}^{0}\left( 0,0\right) &=&\func{grad}\Phi _{t}^{0}\left(
0,0\right) =\left[ \cos t,\sin t\right] -\left( 3-\alpha \right) \cos t\left[
1,0\right] \\
&=&\left[ -\left( 2-\alpha \right) \cos t,\sin t\right] ,
\end{eqnarray*}%
and%
\begin{eqnarray*}
\func{grad}\left( \Phi _{t}\ast \frac{d\theta }{2\pi }\right) \left(
0,0\right) &=&\func{grad}\int_{0}^{2\pi }\left( \Phi _{t}\ast \delta
_{e^{i\theta }}\right) \left( 0,0\right) \frac{d\theta }{2\pi } \\
&=&\left[ \cos t,\sin t\right] -\frac{3-\alpha }{2}\left[ \cos t,\sin t%
\right] \\
&=&\left[ \frac{\alpha -1}{2}\cos t,\frac{\alpha -1}{2}\sin t\right] .
\end{eqnarray*}%
Thus 
\begin{eqnarray*}
&&\left( 3-\alpha \right) \func{grad}\left( \Phi _{\alpha ,t}\ast \mu
_{\alpha }\right) \left( 0,0\right) \\
&=&\left\vert \alpha -1\right\vert \left[ -\left( 2-\alpha \right) \cos
t,\sin t\right] +\left( \left\vert \alpha -3\right\vert -\left\vert \alpha
-1\right\vert \right) \left[ \frac{\alpha -1}{2}\cos t,\frac{\alpha -1}{2}%
\sin t\right] \\
&=&\left[ \left\{ -\left( 2-\alpha \right) \left\vert \alpha -1\right\vert
+\left( \left\vert \alpha -3\right\vert -\left\vert \alpha -1\right\vert
\right) \frac{\alpha -1}{2}\right\} \cos t,\left\{ \left\vert \alpha
-1\right\vert +\left( \left\vert \alpha -3\right\vert -\left\vert \alpha
-1\right\vert \right) \frac{\alpha -1}{2}\right\} \sin t\right] .
\end{eqnarray*}%
Now for $0\leq \alpha <1$ we get%
\begin{equation*}
\left\vert \alpha -1\right\vert +\left( \left\vert \alpha -3\right\vert
-\left\vert \alpha -1\right\vert \right) \frac{\alpha -1}{2}=1-\alpha +2%
\frac{\alpha -1}{2}=0,
\end{equation*}%
and%
\begin{equation*}
-\left( 2-\alpha \right) \left\vert \alpha -1\right\vert +\left( \left\vert
\alpha -3\right\vert -\left\vert \alpha -1\right\vert \right) \frac{\alpha -1%
}{2}=\left( \alpha -1\right) \left( 3-\alpha \right) .
\end{equation*}%
For $1<\alpha <2$ we get%
\begin{equation*}
\left\vert \alpha -1\right\vert +\left( \left\vert \alpha -3\right\vert
-\left\vert \alpha -1\right\vert \right) \frac{\alpha -1}{2}=\left( \alpha
-1\right) \left( 3-\alpha \right) ,
\end{equation*}%
and%
\begin{equation*}
-\left( 2-\alpha \right) \left\vert \alpha -1\right\vert +\left( \left\vert
\alpha -3\right\vert -\left\vert \alpha -1\right\vert \right) \frac{\alpha -1%
}{2}=0.
\end{equation*}
\end{remark}

\bigskip

\begin{proof}[Proof of Lemma \protect\ref{LRE} for the vector of trig
polynomials]
Recall that with $\theta =\theta _{1}\left( w\right) $ we obtain%
\begin{eqnarray*}
A_{\alpha }\left( \theta \right) &=&\left( \alpha -2\right) \Omega \left(
\theta \right) \cos \theta -\Omega ^{\prime }\left( \theta \right) \sin
\theta \\
B_{\alpha }\left( \theta \right) &=&\left( \alpha -2\right) \Omega \left(
\theta \right) \sin \theta +\Omega ^{\prime }\left( \theta \right) \cos
\theta .
\end{eqnarray*}%
Thus we have%
\begin{eqnarray*}
A_{\alpha }\left( \theta \right) &=&\left\{ \left( \alpha -2\right) \Omega
\left( \theta \right) +i\Omega ^{\prime }\left( \theta \right) \right\}
\left\{ \cos \theta +i\sin \theta \right\} \\
&&-i\left\{ \left( \alpha -2\right) \Omega \left( \theta \right) \sin \theta
+\Omega ^{\prime }\left( \theta \right) \cos \theta \right\} \\
&=&\left\{ \left( \alpha -2\right) \Omega \left( \theta \right) +i\Omega
^{\prime }\left( \theta \right) \right\} \left\{ \cos \theta +i\sin \theta
\right\} -iB_{\alpha }\left( \theta \right) ,
\end{eqnarray*}%
and so%
\begin{equation*}
\left\{ \left( \alpha -2\right) \Omega \left( \theta \right) +i\Omega
^{\prime }\left( \theta \right) \right\} \left\{ \cos \theta +i\sin \theta
\right\} =A_{\alpha }\left( \theta \right) +iB_{\alpha }\left( \theta
\right) .
\end{equation*}%
This shows that in complex notation,%
\begin{eqnarray*}
\mathbf{Z}_{\Omega }^{\alpha }\left( c_{J};\mu \right) &=&\int_{\mathbb{S}%
^{1}}\left\{ A_{\alpha }\left( \theta \right) +iB_{\alpha }\left( \theta
\right) \right\} d\Psi _{\mu } \\
&=&\int_{\mathbb{S}^{1}}\left\{ \left( \alpha -2\right) \Omega \left( \theta
\right) +i\Omega ^{\prime }\left( \theta \right) \right\} \left\{ \cos
\theta +i\sin \theta \right\} d\Psi _{\mu } \\
&=&\int_{\mathbb{S}^{1}}\mathbf{\Omega }_{\alpha }\left( \theta \right)
e^{i\theta }d\Psi _{\mu },
\end{eqnarray*}%
where%
\begin{equation*}
\mathbf{\Omega }_{\alpha }\left( \theta \right) \equiv \left( \alpha
-2\right) \Omega \left( \theta \right) +i\Omega ^{\prime }\left( \theta
\right) .
\end{equation*}

Recall the product formulas%
\begin{eqnarray*}
2\cos A\cos B &=&\cos \left( A-B\right) +\cos \left( A+B\right) ; \\
2\sin A\sin B &=&\cos \left( A-B\right) -\cos \left( A+B\right) ; \\
2\sin A\cos B &=&\sin \left( A-B\right) +\sin \left( A+B\right) .
\end{eqnarray*}%
In the special case that $\Omega _{1}^{k}\left( t\right) =\cos kt$ we thus
have%
\begin{eqnarray*}
A_{\alpha }\left( \theta \right) &=&\left( \alpha -2\right) \cos k\theta
\cos \theta +k\sin k\theta \sin \theta \\
&=&\left( \alpha -2\right) \frac{1}{2}\left[ \cos \left( k-1\right) \theta
+\cos \left( k+1\right) \theta \right] \\
&&+k\frac{1}{2}\left[ \cos \left( k-1\right) \theta -\cos \left( k+1\right)
\theta \right] \\
&=&\left\{ \frac{\alpha +k}{2}-1\right\} \cos \left( k-1\right) \theta
+\left\{ \frac{\alpha -k}{2}-1\right\} \cos \left( k+1\right) \theta ; \\
B_{\alpha }\left( \theta \right) &=&\left( \alpha -2\right) \cos k\theta
\sin \theta -k\sin k\theta \cos \theta \\
&=&\left( \alpha -2\right) \frac{1}{2}\left[ -\sin \left( k-1\right) \theta
+\sin \left( k+1\right) \theta \right] \\
&&-k\frac{1}{2}\left[ \sin \left( k-1\right) \theta +\sin \left( k+1\right)
\theta \right] \\
&=&-\left\{ \frac{\alpha +k}{2}-1\right\} \sin \left( k-1\right) \theta
+\left\{ \frac{\alpha -k}{2}-1\right\} \sin \left( k+1\right) \theta ,
\end{eqnarray*}%
and so%
\begin{eqnarray*}
\mathbf{Z}_{\Omega _{1}^{k}}^{\alpha }\left( c_{J};\mu \right) &=&\int_{%
\mathbb{S}^{1}}\left\{ A_{\alpha }\left( \theta \right) \mathbf{e}%
^{1}+B_{\alpha }\left( \theta \right) \mathbf{e}^{2}\right\} d\Psi _{\mu } \\
&=&\int_{\mathbb{S}^{1}}\left[ \left\{ \frac{\alpha +k}{2}-1\right\} \cos
\left( k-1\right) \theta +\left\{ \frac{\alpha -k}{2}-1\right\} \cos \left(
k+1\right) \theta \right] d\Psi _{\mu }\ \mathbf{e}^{1} \\
&&+\int_{\mathbb{S}^{1}}\left[ -\left\{ \frac{\alpha +k}{2}-1\right\} \sin
\left( k-1\right) \theta +\left\{ \frac{\alpha -k}{2}-1\right\} \sin \left(
k+1\right) \theta \right] d\Psi _{\mu }\ \mathbf{e}^{2} \\
&=&\left\{ \frac{\alpha +k-2}{2}\right\} \int_{\mathbb{S}^{1}}\left( 
\begin{array}{c}
\cos \left( k-1\right) \theta \\ 
-\sin \left( k-1\right) \theta%
\end{array}%
\right) d\Psi _{\mu } \\
&&+\left\{ \frac{\alpha -k-2}{2}\right\} \int_{\mathbb{S}^{1}}\left( 
\begin{array}{c}
\cos \left( k+1\right) \theta \\ 
\sin \left( k+1\right) \theta%
\end{array}%
\right) d\Psi _{\mu } \\
&=&\int_{\mathbb{S}^{1}}\left\{ \left( \frac{\alpha +k-2}{2}\right)
e^{-i\left( k-1\right) \theta }+\left( \frac{\alpha -k-2}{2}\right)
e^{i\left( k+1\right) \theta }\right\} d\Psi _{\mu } \\
&=&\left( \frac{\alpha +k-2}{2}\right) \overline{\widehat{\Psi _{\mu }}%
\left( k-1\right) }+\left( \frac{\alpha -k-2}{2}\right) \widehat{\Psi _{\mu }%
}\left( k+1\right) \ .
\end{eqnarray*}

Next we take $\Omega _{2}^{k}\left( \theta \right) =\sin k\theta $ so that%
\begin{eqnarray*}
A_{\alpha }\left( \theta \right) &=&\left( \alpha -2\right) \sin k\theta
\cos \theta -k\cos k\theta \sin \theta \\
&=&\left( \alpha -2\right) \frac{1}{2}\left[ \sin \left( k-1\right) \theta
+\sin \left( k+1\right) \theta \right] \\
&&-k\frac{1}{2}\left[ -\sin \left( k-1\right) \theta +\sin \left( k+1\right)
\theta \right] \\
&=&\left\{ \frac{\alpha +k}{2}-1\right\} \sin \left( k-1\right) \theta
+\left\{ \frac{\alpha -k}{2}-1\right\} \sin \left( k+1\right) \theta ; \\
B_{\alpha }\left( \theta \right) &=&\left( \alpha -2\right) \sin k\theta
\sin \theta +k\cos k\theta \cos \theta \\
&=&\left( \alpha -2\right) \frac{1}{2}\left[ \cos \left( k-1\right) \theta
-\cos \left( k+1\right) \theta \right] \\
&&+k\frac{1}{2}\left[ \cos \left( k-1\right) \theta +\cos \left( k+1\right)
\theta \right] \\
&=&\left\{ \frac{\alpha +k}{2}-1\right\} \cos \left( k-1\right) \theta
-\left\{ \frac{\alpha -k}{2}-1\right\} \cos \left( k+1\right) \theta .
\end{eqnarray*}%
Thus with $\Omega _{2}^{k}\left( \theta \right) =\sin k\theta $ we obtain%
\begin{eqnarray*}
\mathbf{Z}_{\Omega _{2}^{k}}^{\alpha }\left( c_{J};\mu \right) &=&\int_{%
\mathbb{S}^{1}}\left\{ A_{\alpha }\left( \theta \right) \mathbf{e}%
^{1}+B_{\alpha }\left( \theta \right) \mathbf{e}^{2}\right\} d\Psi _{\mu } \\
&=&\int_{\mathbb{S}^{1}}\left[ \left\{ \frac{\alpha +k}{2}-1\right\} \sin
\left( k-1\right) \theta +\left\{ \frac{\alpha -k}{2}-1\right\} \sin \left(
k+1\right) \theta \right] d\Psi _{\mu }\ \mathbf{e}^{1} \\
&&+\int_{\mathbb{S}^{1}}\left[ \left\{ \frac{\alpha +k}{2}-1\right\} \cos
\left( k-1\right) \theta -\left\{ \frac{\alpha -k}{2}-1\right\} \cos \left(
k+1\right) \theta \right] d\Psi _{\mu }\ \mathbf{e}^{2} \\
&=&\left\{ \frac{\alpha +k-2}{2}\right\} \int_{\mathbb{S}^{1}}\left( 
\begin{array}{c}
\sin \left( k-1\right) \theta \\ 
\cos \left( k-1\right) \theta%
\end{array}%
\right) d\Psi _{\mu } \\
&&+\left\{ \frac{\alpha -k-2}{2}\right\} \int_{\mathbb{S}^{1}}\left( 
\begin{array}{c}
\sin \left( k+1\right) \theta \\ 
-\cos \left( k+1\right) \theta%
\end{array}%
\right) d\Psi _{\mu } \\
&=&\int_{\mathbb{S}^{1}}\left\{ \left( \frac{\alpha +k-2}{2}\right)
ie^{-i\left( k-1\right) \theta }-\left( \frac{\alpha -k-2}{2}\right)
ie^{i\left( k+1\right) \theta }\right\} d\Psi _{\mu } \\
&=&i\left( \frac{\alpha +k-2}{2}\right) \overline{\widehat{\Psi _{\mu }}%
\left( k-1\right) }-i\left( \frac{\alpha -k-2}{2}\right) \widehat{\Psi _{\mu
}}\left( k+1\right) \ .
\end{eqnarray*}%
Altogether we have%
\begin{eqnarray}
\mathbf{Z}_{\Omega _{1}^{k}}^{\alpha }\left( c_{J};\mu \right) &=&\left( 
\frac{\alpha +k-2}{2}\right) \overline{\widehat{\Psi _{\mu }}\left(
k-1\right) }+\left( \frac{\alpha -k-2}{2}\right) \widehat{\Psi _{\mu }}%
\left( k+1\right) ;  \label{altogether} \\
\mathbf{Z}_{\Omega _{2}^{k}}^{\alpha }\left( c_{J};\mu \right) &=&i\left[
\left( \frac{\alpha +k-2}{2}\right) \overline{\widehat{\Psi _{\mu }}\left(
k-1\right) }-\left( \frac{\alpha -k-2}{2}\right) \widehat{\Psi _{\mu }}%
\left( k+1\right) \right] \ .  \notag
\end{eqnarray}%
Thus $\det \left[ 
\begin{array}{c}
\mathbf{Z}_{\Omega _{1}^{k}}^{\alpha }\left( c_{J};\mu \right) \\ 
\mathbf{Z}_{\Omega _{2}^{k}}^{\alpha }\left( c_{J};\mu \right)%
\end{array}%
\right] $ is the imaginary part of $\mathbf{Z}_{\Omega _{1}^{k}}^{\alpha
}\left( c_{J};\mu \right) \ \overline{\mathbf{Z}_{\Omega _{2}^{k}}^{\alpha
}\left( c_{J};\mu \right) }$, which is $-1$ times the real part of%
\begin{eqnarray*}
&&\left\{ \left( \frac{\alpha +k-2}{2}\right) \overline{\widehat{\Psi _{\mu }%
}\left( k-1\right) }+\left( \frac{\alpha -k-2}{2}\right) \widehat{\Psi _{\mu
}}\left( k+1\right) \right\} \\
&&\times \left\{ \left( \frac{\alpha +k-2}{2}\right) \widehat{\Psi _{\mu }}%
\left( k-1\right) -\left( \frac{\alpha -k-2}{2}\right) \overline{\widehat{%
\Psi _{\mu }}\left( k+1\right) }\right\} \\
&=&\left( \frac{\alpha +k-2}{2}\right) ^{2}\left\vert \widehat{\Psi _{\mu }}%
\left( k-1\right) \right\vert ^{2}-\left( \frac{\alpha -k-2}{2}\right)
^{2}\left\vert \widehat{\Psi _{\mu }}\left( k+1\right) \right\vert ^{2} \\
&&+\func{Re}\left[ \left( \frac{\alpha +k-2}{2}\right) \left( \frac{\alpha
-k-2}{2}\right) \left( \widehat{\Psi _{\mu }}\left( k+1\right) \widehat{\Psi
_{\mu }}\left( k-1\right) -\overline{\widehat{\Psi _{\mu }}\left( k-1\right) 
}\overline{\widehat{\Psi _{\mu }}\left( k+1\right) }\right) \right] \\
&=&\left( \frac{\alpha +k-2}{2}\right) ^{2}\left\vert \widehat{\Psi _{\mu }}%
\left( k-1\right) \right\vert ^{2}-\left( \frac{\alpha -k-2}{2}\right)
^{2}\left\vert \widehat{\Psi _{\mu }}\left( k+1\right) \right\vert ^{2},
\end{eqnarray*}%
since $\widehat{\Psi _{\mu }}\left( k+1\right) \widehat{\Psi _{\mu }}\left(
k-1\right) -\overline{\widehat{\Psi _{\mu }}\left( k-1\right) }\overline{%
\widehat{\Psi _{\mu }}\left( k+1\right) }$ is pure imaginary. We conclude
that%
\begin{equation}
\det \left[ 
\begin{array}{c}
\mathbf{Z}_{\Omega _{1}^{k}}^{\alpha }\left( c_{J};\mu \right) \\ 
\mathbf{Z}_{\Omega _{2}^{k}}^{\alpha }\left( c_{J};\mu \right)%
\end{array}%
\right] =0\Longleftrightarrow \left\vert \widehat{\Psi _{\mu }}\left(
k+1\right) \right\vert =\left\vert \frac{\alpha +k-2}{\alpha -k-2}%
\right\vert \left\vert \widehat{\Psi _{\mu }}\left( k-1\right) \right\vert
,\ \ \ \ \ \text{all }k.  \label{parallel}
\end{equation}

We also have that $\det \left[ 
\begin{array}{c}
\mathbf{Z}_{\Omega _{1}^{k}}^{\alpha }\left( c_{J};\mu \right) \\ 
\mathbf{Z}_{\Omega _{1}^{\ell }}^{\alpha }\left( c_{J};\mu \right)%
\end{array}%
\right] $ is the imaginary part of $\mathbf{Z}_{\Omega _{1}^{k}}^{\alpha
}\left( c_{J};\mu \right) \ \overline{\mathbf{Z}_{\Omega _{1}^{\ell
}}^{\alpha }\left( c_{J};\mu \right) }$, i.e. the imaginary part of%
\begin{eqnarray*}
&&\left\{ \left( \frac{\alpha +k-2}{2}\right) \overline{\widehat{\Psi _{\mu }%
}\left( k-1\right) }+\left( \frac{\alpha -k-2}{2}\right) \widehat{\Psi _{\mu
}}\left( k+1\right) \right\} \\
&&\times \left\{ \left( \frac{\alpha +\ell -2}{2}\right) \widehat{\Psi _{\mu
}}\left( k+1\right) +\left( \frac{\alpha -\ell -2}{2}\right) \overline{%
\widehat{\Psi _{\mu }}\left( k+3\right) }\right\} .
\end{eqnarray*}%
If we now suppose that $\widehat{\Psi _{\mu }}\left( n\right) $ is real for
all $n$, then $\mathbf{Z}_{\Omega _{1}^{k}}^{\alpha }\left( c_{J};\mu
\right) $ is real for all $k$, and it follows that 
\begin{equation}
\det \left[ 
\begin{array}{c}
\mathbf{Z}_{\Omega _{1}^{k}}^{\alpha }\left( c_{J};\mu \right) \\ 
\mathbf{Z}_{\Omega _{1}^{\ell }}^{\alpha }\left( c_{J};\mu \right)%
\end{array}%
\right] =\func{Im}\left( \mathbf{Z}_{\Omega _{1}^{k}}^{\alpha }\left(
c_{J};\mu \right) \ \overline{\mathbf{Z}_{\Omega _{1}^{\ell }}^{\alpha
}\left( c_{J};\mu \right) }\right) =0,\ \ \ \ \ \text{all }k,\ell .
\label{parallel real}
\end{equation}%
We are now ready to construct the measure $\mu $ with an appropriate density 
$\Psi _{\mu }$. In the case $1\leq \alpha <2$ there is a choice of density
that is easy to prove positive, and we give that first. Then we give a
density for all cases $0\leq \alpha <2$, but that is much harder to prove
positive. Finally we give a particularly simple proof for the case $\alpha
=0 $.

\textbf{Construction of a density in the case }$1\leq \alpha <2$\textbf{:}

Define a density $\Psi \left( \theta \right) $ by%
\begin{equation*}
\Psi \left( \theta \right) =1+2\sum_{n=1}^{\infty }b_{n}\cos \left( 2n\theta
\right) =1+\sum_{n=1}^{\infty }b_{n}\left\{ e^{i2n\theta }+e^{-i2n\theta
}\right\} \ ,
\end{equation*}%
where%
\begin{eqnarray*}
b_{n} &=&\left\vert \frac{\alpha +\left( 2n-3\right) }{\alpha -\left(
2n+1\right) }\frac{\alpha +\left( 2n-5\right) }{\alpha -\left( 2n-1\right) }%
...\frac{\alpha +3}{\alpha -7}\frac{\alpha +1}{\alpha -5}\frac{\alpha -1}{%
\alpha -3}\right\vert \\
&=&a_{n}a_{n-1}...a_{2}a_{1},\ \ \ \ \ n\geq 1; \\
\text{with }a_{n} &=&\left\vert \frac{\alpha +\left( 2n-3\right) }{\alpha
-\left( 2n+1\right) }\right\vert =\left\vert \frac{2n-1-x}{2n-1+x}%
\right\vert \text{ if }x=2-\alpha .
\end{eqnarray*}%
Then we have%
\begin{eqnarray*}
\widehat{\Psi }\left( 2n\right) &=&b_{n}=\widehat{\Psi }\left( -2n\right) ,\
\ \ \ \ n\geq 1, \\
\widehat{\Psi }\left( k\right) &=&0\text{ if }k\text{ is odd},
\end{eqnarray*}%
and in particular that $\left\vert \widehat{\Psi }\left( k+1\right)
\right\vert =\left\vert \frac{\alpha +k-2}{\alpha -k-2}\right\vert
\left\vert \widehat{\Psi }\left( k-1\right) \right\vert $ for all $k\geq 1$.
Now choose a measure $\mu $ giving rise to the density $\Psi $. In the case $%
1\leq \alpha <2$ we have $\left\vert \frac{\alpha +k-2}{\alpha -k-2}%
\right\vert =-\frac{\alpha +k-2}{\alpha -k-2}$ for $k\geq 1$, and so from (%
\ref{altogether}) we actually obtain that $\mathbf{Z}_{\Omega
_{1}^{k}}^{\alpha }\left( c_{J};\mu \right) =0$ for all $k\geq 1$, and that $%
\mathbf{Z}_{\Omega _{2}^{k}}^{\alpha }\left( c_{J};\mu \right) $ is
imaginary for all $k\geq 1$. Thus all of the vectors $\left\{ \mathbf{Z}%
_{\Omega _{1}^{k}}^{\alpha }\left( c_{J};\mu \right) ,\mathbf{Z}_{\Omega
_{2}^{k}}^{\alpha }\left( c_{J};\mu \right) \right\} _{k=1}^{\infty }$ are
multiples of the unit vector $\left( 0,1\right) $ in the plane (it is the
failure of such a conclusion for the case $0<\alpha <1$ that forces a
different construction below).

We must now show that the density $\Psi \left( \theta \right) $ is
nonnegative. We have $\Psi \left( \theta \right) =\Phi \left( 2\theta
\right) $ where $\widehat{\Phi }\left( 0\right) =1$ and%
\begin{equation*}
\widehat{\Phi }\left( n\right) =\widehat{\Phi }\left( -n\right)
=b_{n}=a_{n}a_{n-1}...a_{2}a_{1},\ \ \ \ \ n\geq 1.
\end{equation*}%
We claim that the nonnegative sequence $\left\{ 1,b_{1},b_{2},...\right\} $
is convex for $0<x\leq 2$, and has limit $0$ as $n\rightarrow \infty $. With
this established, the density $\Phi $ is a positive sum of F\'{e}jer
kernels, and hence $\Phi \left( \theta \right) \geq 0$. Since $a_{n}=\frac{%
2n-1-x}{2n-1+x}=1-\frac{2x}{2n-1+x}$ and $\sum_{n=1}^{\infty }\frac{2x}{%
2n-1+x}=\infty $, we see that $\lim_{n\rightarrow \infty
}b_{n}=\dprod\limits_{n=1}^{\infty }\left( 1-\frac{2x}{2n-1+x}\right) =0$.
To see the convexity we note that%
\begin{eqnarray*}
b_{n+1}+b_{n-1}-2b_{n} &=&a_{n+1}a_{n}\left[ a_{n-1}...a_{2}a_{1}\right] +%
\left[ a_{n-1}...a_{2}a_{1}\right] -2a_{n}\left[ a_{n-1}...a_{2}a_{1}\right]
\\
&=&\left[ a_{n+1}a_{n}+1-2a_{n}\right] \left[ a_{n-1}...a_{2}a_{1}\right]
\end{eqnarray*}%
is positive if and only if $a_{n+1}a_{n}+1-2a_{n}$ is positive. But for $%
n\geq 2$ and $0<x\leq 2$, we have $a_{n}=\frac{2n-1-x}{2n-1+x}$ and so 
\begin{eqnarray*}
a_{n+1}a_{n}+1-2a_{n} &=&\left( a_{n+1}-2\right) a_{n}+1 \\
&=&\left( \frac{2n+1-x}{2n+1+x}-2\right) \frac{2n-1-x}{2n-1+x}+1 \\
&=&-\left( \frac{2n+1+3x}{2n+1+x}\right) \frac{2n-1-x}{2n-1+x}+1 \\
&=&\frac{\left( 2n+1+x\right) \left( 2n-1+x\right) -\left( 2n+1+3x\right)
\left( 2n-1-x\right) }{\left( 2n+1+x\right) \left( 2n-1+x\right) } \\
&=&\frac{4x^{2}+4x}{\left( 2n+1+x\right) \left( 2n-1+x\right) }>0.
\end{eqnarray*}%
This calculation is valid also when $n=1$ and $0<x\leq 1$, so it remains to
consider only the case $n=1$ and $1\leq x\leq 2$. But then we have $a_{1}=%
\frac{x-1}{1+x}$ and so 
\begin{eqnarray*}
a_{2}a_{1}+1-2a_{1} &=&\left( a_{2}-2\right) a_{1}+1 \\
&=&\left( \frac{3-x}{3+x}-2\right) \frac{x-1}{1+x}+1=\frac{6-2x}{3+x}>0.
\end{eqnarray*}

\textbf{Construction of a density in the general case }$0\leq \alpha <2$%
\textbf{:}

This time we modify the definition of our density to be 
\begin{equation*}
\widetilde{\Psi }\left( \theta \right) =1+2\sum_{n=1}^{\infty }b_{n}\cos
\left( 2n\theta \right) =1+\sum_{n=1}^{\infty }b_{n}\left\{ e^{i2n\theta
}+e^{-i2n\theta }\right\} \ ,
\end{equation*}%
where%
\begin{eqnarray*}
b_{n} &=&\frac{\alpha +\left( 2n-3\right) }{\alpha -\left( 2n+1\right) }%
\frac{\alpha +\left( 2n-5\right) }{\alpha -\left( 2n-1\right) }...\frac{%
\alpha +3}{\alpha -7}\frac{\alpha +1}{\alpha -5}\frac{\alpha -1}{\alpha -3}
\\
&=&a_{n}a_{n-1}...a_{2}a_{1},\ \ \ \ \ n\geq 1; \\
\text{where }a_{n} &=&\frac{\alpha +\left( 2n-3\right) }{\alpha -\left(
2n+1\right) }=-\frac{2n-1-x}{2n-1+x}\text{ if }x=2-\alpha .
\end{eqnarray*}%
Then we have%
\begin{eqnarray*}
\widehat{\widetilde{\Psi }}\left( 2n\right) &=&b_{n}=\widehat{\widetilde{%
\Psi }}\left( -2n\right) ,\ \ \ \ \ 1\leq n\leq N, \\
\widehat{\widetilde{\Psi }}\left( k\right) &=&0\text{ if }k\text{ is odd},
\end{eqnarray*}%
and in particular, if $\widetilde{\mu }$ is chosen to give rise to the
density $\widetilde{\Psi }$, then from (\ref{altogether}) we obtain that $%
\mathbf{Z}_{\Omega _{2}^{k}}^{\alpha }\left( c_{J};\widetilde{\mu }\right)
=0 $ for all $k\geq 1$, and that $\mathbf{Z}_{\Omega _{1}^{k}}^{\alpha
}\left( c_{J};\widetilde{\mu }\right) $ is real for all $k\geq 1$. Thus all
of the vectors $\left\{ \mathbf{Z}_{\Omega _{1}^{k}}^{\alpha }\left( c_{J};%
\widetilde{\mu }\right) ,\mathbf{Z}_{\Omega _{2}^{k}}^{\alpha }\left( c_{J};%
\widetilde{\mu }\right) \right\} _{k=1}^{\infty }$ are multiples of the unit
vector $\left( 1,0\right) $ in the plane.

Finally, we must show that the density $\widetilde{\Psi }\left( \theta
\right) $ is positive. Now%
\begin{equation*}
\widehat{\widetilde{\Psi }}\left( 2n\right) =b_{n}=a_{n}a_{n-1}...a_{2}a_{1},
\end{equation*}%
and so by B\^{o}chner's theorem (more precisely Herglotz's theorem in this
application - see e.g. Rudin \cite{Rud} for an extension to locally compact
abelian groups), it suffices to check that the following matrices are
positive semidefinite for $n\geq 2$:%
\begin{eqnarray*}
\mathbf{B}_{n} &=&\left[ 
\begin{array}{ccccc}
\widehat{\widetilde{\Psi }}\left( 0\right) & \widehat{\widetilde{\Psi }}%
\left( 2\right) & \widehat{\widetilde{\Psi }}\left( 4\right) & \cdots & 
\widehat{\widetilde{\Psi }}\left( 2n\right) \\ 
\overline{\widehat{\widetilde{\Psi }}\left( 2\right) } & \widehat{\widetilde{%
\Psi }}\left( 0\right) & \widehat{\widetilde{\Psi }}\left( 2\right) & \cdots
& \widehat{\widetilde{\Psi }}\left( 2n-2\right) \\ 
\overline{\widehat{\widetilde{\Psi }}\left( 4\right) } & \overline{\widehat{%
\widetilde{\Psi }}\left( 2\right) } & \widehat{\widetilde{\Psi }}\left(
0\right) & \cdots & \widehat{\widetilde{\Psi }}\left( 2n-4\right) \\ 
\vdots & \vdots & \vdots & \ddots & \vdots \\ 
\overline{\widehat{\widetilde{\Psi }}\left( 2n\right) } & \overline{\widehat{%
\widetilde{\Psi }}\left( 2n-2\right) } & \overline{\widehat{\widetilde{\Psi }%
}\left( 2n-4\right) } & \cdots & \widehat{\widetilde{\Psi }}\left( 0\right)%
\end{array}%
\right] \\
&=&\left[ 
\begin{array}{ccccc}
1 & a_{1} & a_{2}a_{1} & \cdots & a_{n}...a_{1} \\ 
a_{1} & 1 & a_{1} & \cdots & a_{n-1}...a_{1} \\ 
a_{2}a_{1} & a_{1} & 1 & \cdots & a_{n-2}...a_{1} \\ 
\vdots & \vdots & \vdots & \ddots & \vdots \\ 
a_{n}...a_{1} & a_{n-1}...a_{1} & a_{n-2}...a_{1} & \cdots & 1%
\end{array}%
\right] .
\end{eqnarray*}%
Since $a_{n}=-\frac{2n-1-x}{2n-1+x}$, the matrix $\mathbf{B}_{n}$ is%
\begin{equation}
\mathbf{B}_{n}\left( x\right) =\left[ 
\begin{array}{cccccc}
1 & -\frac{1-x}{1+x} & \frac{3-x}{3+x}\frac{1-x}{1+x} & \cdots & \cdots & 
\left( -1\right) ^{n+1}\frac{\left( 2n-3\right) -x}{\left( 2n-3\right) +x}%
\cdots \frac{3-x}{3+x}\frac{1-x}{1+x} \\ 
-\frac{1-x}{1+x} & 1 & -\frac{1-x}{1+x} & \cdots & \cdots & \vdots \\ 
\frac{3-x}{3+x}\frac{1-x}{1+x} & -\frac{1-x}{1+x} & 1 & \ddots &  & \vdots
\\ 
\vdots & \vdots & \ddots & \ddots & \ddots &  \\ 
\vdots & \vdots &  & \ddots & 1 & -\frac{1-x}{1+x} \\ 
\left( -1\right) ^{n+1}\frac{\left( 2n-3\right) -x}{\left( 2n-3\right) +x}%
\cdots \frac{3-x}{3+x}\frac{1-x}{1+x} & \cdots & \cdots &  & -\frac{1-x}{1+x}
& 1%
\end{array}%
\right] ,  \label{def B}
\end{equation}%
and a standard reduction in matrix theory shows that it is enough to show
that $\det \mathbf{B}_{n}\left( x\right) \geq 0$ for all $n\geq 2$.

In the appendix below, we prove that these determinants satisfy the
recursion formula%
\begin{equation}
\frac{\det \mathbf{B}_{n+1}\left( x\right) }{\det \mathbf{B}_{n}\left(
x\right) }=2^{2n}\frac{n!\left( n-1+x\right) \left( n-2+x\right) ...\left(
x\right) }{\left[ \left( 2n-1+x\right) \left( 2n-3+x\right) ...\left(
1+x\right) \right] ^{2}},\ \ \ \ \ n\geq 1.  \label{recursion}
\end{equation}%
From this recursion we immediately obtain that for $x>0$, the determinants $%
\det \mathbf{B}_{n}\left( x\right) $ and $\det \mathbf{B}_{n+1}\left(
x\right) $ have the same sign. Then since $\det \mathbf{B}_{1}\left(
x\right) =1$, induction shows that 
\begin{equation}
\det \mathbf{B}_{n}\left( x\right) >0\text{ for all }x>0,\ n\geq 1.
\label{positive det}
\end{equation}%
This completes the proof that the matrices $\mathbf{B}_{n}$ are positive
definite for all $n\geq 1$ and $x>0$, and hence that the density $\widetilde{%
\Psi }$ is positive. We note that this completes the proof of Lemma \ref{LRE}
for all $0\leq \alpha <2$.

\textbf{Construction of the density in the case }$\alpha =0$\textbf{:}

The case $\alpha =0$ corresponds to the usual singular integrals in the
plane, and for this case there is an especially simple proof of the
nonnegativity of the density $\widetilde{\Psi }$. We simply note that the
density $\widetilde{\Psi }$ is nonnegative by taking absolute values inside
the sum,%
\begin{equation*}
\widetilde{\Psi }\left( \theta \right) =1+2\sum_{n=1}^{\infty }b_{n}\cos
\left( 2n\theta \right) \geq 1-2\sum_{n=1}^{\infty }\left\vert
b_{n}\right\vert \ ,
\end{equation*}%
and then calculating that%
\begin{eqnarray*}
\left\vert b_{n}\right\vert &=&\left\vert
a_{n}a_{n-1}...a_{2}a_{1}\right\vert \\
&=&\frac{\left( 2n-3\right) }{\left( 2n+1\right) }\frac{\left( 2n-5\right) }{%
\left( 2n-1\right) }...\frac{3}{7}\frac{1}{5}\frac{1}{3} \\
&=&\frac{1}{\left( 2n+1\right) \left( 2n-1\right) }=\frac{1}{2}\left( \frac{1%
}{2n-1}-\frac{1}{2n+1}\right) ,
\end{eqnarray*}%
hence%
\begin{equation*}
\sum_{n=1}^{\infty }\left\vert b_{n}\right\vert =\sum_{n=1}^{\infty }\frac{1%
}{2}\left( \frac{1}{2n-1}-\frac{1}{2n+1}\right) =\frac{1}{2}.
\end{equation*}
\end{proof}

Now we show how to adapt the above proof to prove Corollary \ref{LRE porism}.

\begin{proof}[Proof of Corollary \protect\ref{LRE porism}]
First we note that if $\Omega $ is sufficiently smooth with vanishing
integral on the circle, then it is an absolutely convergent sum of the trig
functions $\cos n\theta $ and $\sin n\theta $ for $n\geq 1$. Thus a standard
limiting argument extends the above failure of energy reversal to any finite
vector of such $\Omega $. Now embed the measure $\widetilde{\mu }$ with
density $\widetilde{\Psi }$ constructed above into Euclidean space $\mathbb{R%
}^{n}$ via the embedding $\mathbb{R}^{2}\ni \left( x_{1},x_{2}\right)
\rightarrow \left( x_{1},x_{2},x_{3},...,x_{n}\right) \in \mathbb{R}%
^{2}\times \mathbb{R}^{n-2}$. Here we are letting the parameter $x=n-\alpha $
lie in the interval $\left( 0,n\right] $. Then the above proof shows that
for cubes $J$ with center $c_{J}\in \mathbb{R}^{2}\times \left\{ 0\right\} $%
, the gradients $\mathbf{Z}_{\Omega }^{\alpha }\left( c_{J};\widetilde{\mu }%
\right) $ of the kernels $\Omega $ have their planar projections parallel to 
$\left( 1,0\right) $, and hence all the gradients $\mathbf{Z}_{\Omega
}^{\alpha }\left( c_{J};\widetilde{\mu }\right) $ are perpendicular to the
fixed direction $\left( 0,1,0....,0\right) $ in $\mathbb{R}^{n}$. As a
consequence, reversal of energy fails in $J$ for the measure $\widetilde{\mu 
}$, and it remains only to show that the density $\widetilde{\Psi }$ is
positive. But this is implied by the positivity of $\det \mathbf{B}%
_{n}\left( x\right) $ for $x\in \left( 0,n\right] $, which follows from the
recursion (\ref{recursion}) and the fact that $\det \mathbf{B}_{1}\left(
x\right) =1>0$.
\end{proof}

\section{Appendix}

We can rewrite the recursion (\ref{recursion}) above as%
\begin{equation}
\frac{\det \mathbf{B}_{n+1}\left( x\right) }{\det \mathbf{B}_{n}\left(
x\right) }=\frac{\Omega _{n}^{n}\left( x-1\right) }{\left[ \Omega
_{n}^{n}\left( \frac{x-1}{2}\right) \right] ^{2}},\ \ \ \ \ n\geq 1,
\label{recursion''}
\end{equation}%
where for any positive integer $n$ and real number $a$ we define the
combinatorial coefficient%
\begin{equation*}
\Omega _{n}^{n}\left( a\right) \equiv \frac{\left( n+a\right) \left(
n-1+a\right) ...\left( 1+a\right) }{\left( n\right) \left( n-1\right)
...\left( 1\right) }.
\end{equation*}%
We now prove the recursion formula (\ref{recursion''}) using the well known
block determinant formula%
\begin{equation}
\det \left[ 
\begin{array}{cc}
\mathbf{B} & \mathbf{c} \\ 
\mathbf{r} & a%
\end{array}%
\right] =a\det \mathbf{B}-\mathbf{r}\left[ \limfunc{co}\mathbf{B}\right] ^{%
\limfunc{tr}}\mathbf{c}=\det \mathbf{B}\left\{ a-\mathbf{rB}^{-1}\mathbf{c}%
\right\} \ ,  \label{block det}
\end{equation}%
where $\mathbf{B}$ is an $n\times n$ matrix and $\mathbf{r}$ and $\mathbf{c}$
are $n$-dimensional row and column vectors respectively. Here $\left[ 
\limfunc{co}\mathbf{B}\right] ^{\limfunc{tr}}$ denotes the transposed
cofactor matrix of $\mathbf{B}$ and the inverse of $\mathbf{B}$ is given by $%
\mathbf{B}^{-1}=\frac{1}{\det \mathbf{B}}\left[ \limfunc{co}\mathbf{B}\right]
^{\limfunc{tr}}$. If we apply this with $\mathbf{B}=\mathbf{B}_{n}\left(
x\right) $ and $\left[ 
\begin{array}{cc}
\mathbf{B} & \mathbf{c} \\ 
\mathbf{r} & a%
\end{array}%
\right] =\mathbf{B}_{n+1}\left( x\right) $ we get%
\begin{eqnarray}
\det \mathbf{B}_{n+1}\left( x\right) &=&\det \left[ 
\begin{array}{cc}
\mathbf{B}_{n}\left( x\right) & \mathbf{c}^{n}\left( x\right) \\ 
\mathbf{r}_{n}\left( x\right) & 1%
\end{array}%
\right]  \label{block det'} \\
&=&\det \mathbf{B}_{n}\left( x\right) \left\{ 1-\mathbf{r}_{n}\left(
x\right) \mathbf{B}_{n}\left( x\right) ^{-1}\mathbf{c}^{n}\left( x\right)
\right\} ,  \notag
\end{eqnarray}%
where $\mathbf{r}_{n}\left( x\right) $ denotes the $n$-dimensional row
vector consisting of the first $n$ entries of the bottom row of $\mathbf{B}%
_{n+1}\left( x\right) $, and similarly $\mathbf{c}^{n}\left( x\right) $
denotes the $n$-dimensional column vector consisting of the first $n$
entries of the rightmost column of $\mathbf{B}_{n+1}\left( x\right) $. Note
also that $\mathbf{r}_{n}\left( x\right) $ and $\mathbf{c}^{n}\left(
x\right) $ are transposes of each other.

Motivated by computer algebra calculations, we \textbf{define} the column
vector%
\begin{equation}
\mathbf{v}^{n}\left( x\right) \equiv \left( -1\right) ^{n-1}\left[ \left(
-1\right) ^{k}\left( 
\begin{array}{c}
n \\ 
k%
\end{array}%
\right) \Gamma _{k}^{n}\left( \frac{x-1}{2}\right) \right] _{k=0}^{n-1},
\label{def v}
\end{equation}%
where%
\begin{equation*}
\Gamma _{k}^{n}\left( a\right) \equiv \frac{\Gamma \left( k+a+1\right)
\Gamma \left( n-k+a\right) }{\Gamma \left( n+a+1\right) \Gamma \left(
a\right) }=\frac{\left( k+a\right) ...\left( a\right) }{\left( n+a\right)
...\left( n-k+a\right) }.
\end{equation*}

\begin{lemma}
\label{vector}For $n\geq 1$ we have%
\begin{equation*}
\mathbf{B}_{n}\left( x\right) ^{-1}\mathbf{c}^{n}\left( x\right) =\mathbf{v}%
^{n}\left( x\right) \ .
\end{equation*}
\end{lemma}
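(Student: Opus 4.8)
The plan is to prove the equivalent matrix identity
\[
\mathbf{B}_{n}\left( x\right) \mathbf{v}^{n}\left( x\right) =\mathbf{c}
^{n}\left( x\right) ,
\]
viewed as an identity of vectors of rational functions of $x$; the stated
form $\mathbf{B}_{n}\left( x\right) ^{-1}\mathbf{c}^{n}\left( x\right) =
\mathbf{v}^{n}\left( x\right) $ then follows at every $x$ for which $\det
\mathbf{B}_{n}\left( x\right) \neq 0$, and $\det \mathbf{B}_{n}$ is not
identically $0$ since $\mathbf{B}_{n}\left( 1\right) =I$. Since the matrix
identity amounts to $n$ scalar identities, one per row, I would verify them
one at a time.

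To set up the $i^{\mathrm{th}}$ row equation I put $a=\frac{x-1}{2}$, so
that the entries of $\mathbf{B}_{n}$ are $\left( \mathbf{B}_{n}\right)
_{i,k}=t_{\left\vert i-k\right\vert }$ with
\[
t_{m}=\left( -1\right) ^{m}\frac{\left( -a\right) _{m}}{\left( a+1\right)
_{m}}=\frac{\Gamma \left( a+1\right) ^{2}}{\Gamma \left( a+1-m\right) \Gamma
\left( a+1+m\right) }\ ,
\]
while $\left( \mathbf{c}^{n}\right) _{i}=t_{n-i}$, and, by the definition (
\ref{def v}) of $\mathbf{v}^{n}$ together with that of $\Gamma _{k}^{n}$, a
short manipulation of Gamma functions rewrites $\left( \mathbf{v}^{n}\right)
_{k}$ as $\left( -1\right) ^{n-1}\left( -1\right) ^{k}\frac{\left( a\right)
_{n}}{\left( a+1\right) _{n}}c_{k}$, where $c_{k}=\frac{\left( -n\right)
_{k}\left( a+1\right) _{k}}{k!\left( 1-a-n\right) _{k}}$. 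Substituting these
into the $i^{\mathrm{th}}$ row equation, clearing the Gamma-function
factors, and using $c_{n}=\frac{a+n}{a}=\frac{\left( a+1\right) _{n}}{\left(
a\right) _{n}}$ (so that the right side of that equation is precisely the
negative of the $k=n$ term of the left side), the $i^{\mathrm{th}}$ row
identity becomes equivalent to the single scalar statement
\[
\sum_{k=0}^{n}\left( -1\right) ^{k}c_{k}\,t_{\left\vert i-k\right\vert
}=0,\qquad i=0,1,\dots ,n-1\ .
\]

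To prove this vanishing identity I would split the sum at $k=i$ in order to
remove the absolute value: writing $k=i+j$ on the piece $k\geq i$ and $k=i-j$
on the piece $k<i$, the signs $\left( -1\right) ^{k}$ produced by the
splitting cancel the factor $\left( -1\right) ^{m}$ in $t_{m}$, and the
standard reductions $\left( -n\right) _{i+j}=\left( -n\right) _{i}\left(
i-n\right) _{j}$, $\left( i+j\right) !=i!\left( i+1\right) _{j}$, $\left(
c-j\right) _{j}=\left( -1\right) ^{j}\left( 1-c\right) _{j}$, etc., turn each
piece into a terminating, Saalsch\"{u}tzian ${}_{4}F_{3}$ of unit argument.
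Concretely, after dividing by $\left( -1\right) ^{i}c_{i}$ the identity
becomes
\[
{}_{4}F_{3}\left( i-n,\,a+i+1,\,-a,\,1;\,i+1,\,i-a-n+1,\,a+1;\,1\right) +
{}_{4}F_{3}\left( -i,\,a+n-i,\,-a,\,1;\,n-i+1,\,-a-i,\,a+1;\,1\right) =1\ ,
\]
and I would establish this either by combining the two ${}_{4}F_{3}$'s with
classical contiguous/summation machinery (exploiting the near-cancellations
among the parameters to reduce to Saalsch\"{u}tz's and Kummer's theorems),
or, failing that, by proving the identity directly by induction on $n$ using
the contiguous relations among the coefficients $c_{k}$. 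As a conceptual
cross-check, and a possible alternative route through orthogonal polynomials
on the unit circle, note that $\frac{\Gamma \left( a+1\right) ^{2}}{\Gamma
\left( 2a+1\right) }t_{\left\vert m\right\vert }$ is the $m^{\mathrm{th}}$
Fourier coefficient of the weight $\left\vert 1+e^{i\theta }\right\vert
^{2a}=\left( 2\cos \frac{\theta }{2}\right) ^{2a}$, while $\sum_{k}\left(
-1\right) ^{k}c_{k}z^{k}={}_{2}F_{1}\left( -n,a+1;1-a-n;-z\right) $ is a
polynomial of degree $n$; the displayed vanishing is then exactly the
statement that this hypergeometric polynomial is orthogonal to $1,z,\dots
,z^{n-1}$ against that Jacobi-type circle weight, for which an explicit
formula is classical.

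The main obstacle is the absolute value $\left\vert i-k\right\vert $: it
prevents the row identity from being a single hypergeometric sum, so the
argument has to break into two pieces and then recombine them, and it is the
recombination — locating (or deriving) the correct terminating $
{}_{4}F_{3}\left( 1\right) $ evaluation, or setting up the right induction
on $n$, or invoking the orthogonal-polynomial formula — that is the crux.
By contrast the two preparatory steps, namely the Gamma-function bookkeeping
that reduces $\mathbf{B}_{n}\mathbf{v}^{n}=\mathbf{c}^{n}$ to the boxed
scalar identity and the sign tracking in the split at $k=i$, are routine and
I would not dwell on them.
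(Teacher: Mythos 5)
Your reduction is sound as far as it goes: passing to $\mathbf{B}_{n}(x)\mathbf{v}^{n}(x)=\mathbf{c}^{n}(x)$, the Gamma-function rewriting of $t_{m}$, $\mathbf{v}^{n}$ and $\mathbf{c}^{n}$, and the observation that the $k=n$ term absorbs the right-hand side, correctly reduce the lemma to the row identities $\sum_{k=0}^{n}(-1)^{k}c_{k}t_{|i-k|}=0$ for $i=0,\dots,n-1$; this parallels the paper's first step. But the crux is then left unproved: you propose to split at $k=i$, arrive at a sum of two terminating ${}_{4}F_{3}(1)$ series asserted to equal $1$, and offer three possible ways to finish (locating or deriving the right ${}_{4}F_{3}$ evaluation, an induction on $n$ via contiguous relations that is not set up, or an appeal to an unspecified ``classical'' orthogonal-polynomial formula for the weight $(2\cos\frac{\theta}{2})^{2a}$) without carrying any of them out. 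Since this vanishing identity is the entire content of the lemma, the proposal as written has a genuine gap at its decisive step.

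Moreover, the obstacle you single out, the absolute value $|i-k|$, is illusory, and seeing this closes the gap in one line; it is exactly the paper's argument. Your own formula $t_{m}=\frac{\Gamma(a+1)^{2}}{\Gamma(a+1-m)\Gamma(a+1+m)}$ is even in $m$, so $t_{|i-k|}$ equals the same Gamma expression with $m=i-k$ for every $k$, and no case split is needed. Writing $c_{k}=\binom{n}{k}\frac{\Gamma(a+1+k)\Gamma(a+n-k)}{\Gamma(a+1)\Gamma(a+n)}$, the $i$-th row sum becomes, up to factors independent of $k$,
\begin{equation*}
\sum_{k=0}^{n}(-1)^{k}\binom{n}{k}\,\frac{\Gamma(a+1+k)}{\Gamma(a+1+k-i)}\cdot\frac{\Gamma(a+n-k)}{\Gamma(a+1+i-k)},
\end{equation*}
and for $0\leq i\leq n-1$ the two Gamma ratios are polynomials in $k$ of degrees $i$ and $n-1-i$ respectively, so the summand is $(-1)^{k}\binom{n}{k}$ times a polynomial in $k$ of degree $n-1<n$; the sum is the $n$-th finite difference $\Delta^{n}$ of that polynomial and hence vanishes. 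This finite-difference mechanism, with $z=a+1=\frac{x+1}{2}$ and $\mathbf{B}_{n}(x)=\bigl[\frac{\Gamma(z)^{2}}{\Gamma(z-|i-j|)\Gamma(z+|i-j|)}\bigr]_{i,j=1}^{n}$, is precisely how the paper proves the lemma (it carries out the first component explicitly and notes the same computation handles the rest). Your ${}_{4}F_{3}$ route and the circle-weight orthogonality remark are plausible and the latter is a nice conceptual reformulation, but without an executed summation theorem, a completed induction, or a precise citation matched to your polynomial, the key identity remains asserted rather than established.
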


\begin{proof}
It suffices to show the vector identity%
\begin{equation*}
\mathbf{B}_{n}\left( x\right) \mathbf{v}^{n}\left( x\right) =\mathbf{c}%
^{n}\left( x\right) ,\ \ \ \ \ n\geq 1,
\end{equation*}%
and to prove this we will use the well known fact that an $n^{th}$ order
difference of a polynomial of degree less than $n$ vanishes. More
specifically the polynomial in question will be 
\begin{equation*}
P_{n-1}\left( s\right) \equiv \frac{\Gamma \left( n-1+s\right) }{\Gamma
\left( s\right) }=\left( n-1+s\right) ...\left( 1+s\right) s.
\end{equation*}%
Indeed,%
\begin{eqnarray*}
\mathbf{v}^{n}\left( x\right) &\equiv &\left[ \left( -1\right) ^{k}\left( 
\begin{array}{c}
n \\ 
n-1-k%
\end{array}%
\right) \Gamma _{n-1-k}^{n}\left( \frac{x-1}{2}\right) \right] _{k=0}^{n-1}
\\
&=&\left[ \left( -1\right) ^{k}\left( 
\begin{array}{c}
n \\ 
k+1%
\end{array}%
\right) \frac{\Gamma \left( n-k+\frac{x-1}{2}\right) \Gamma \left( 1+k+\frac{%
x-1}{2}\right) }{\Gamma \left( n+1+\frac{x-1}{2}\right) \Gamma \left( \frac{%
x-1}{2}\right) }\right] _{k=0}^{n-1} \\
&=&\left[ \left( -1\right) ^{k-1}\left( 
\begin{array}{c}
n \\ 
k%
\end{array}%
\right) \frac{\Gamma \left( n-k+z\right) \Gamma \left( k-1+z\right) }{\Gamma
\left( n+z\right) \Gamma \left( -1+z\right) }\right] _{k=1}^{n}\ ,
\end{eqnarray*}%
where%
\begin{equation*}
z=\frac{x-1}{2}+1=\frac{x+1}{2}.
\end{equation*}%
Now we use%
\begin{eqnarray*}
&&\frac{\left( x-1\right) \left( x-3\right) \left( x-5\right) ...\left(
x-\left( 2n-1\right) \right) }{\left( x+1\right) \left( x+3\right) \left(
x+5\right) ...\left( x+\left( 2n-1\right) \right) } \\
&=&\frac{\left( \frac{x-1}{2}\right) \left( \frac{x-1}{2}-1\right) \left( 
\frac{x-1}{2}-2\right) ...\left( \frac{x-1}{2}-\left( n-1\right) \right) }{%
\left( \frac{x-1}{2}+1\right) \left( \frac{x-1}{2}+2\right) \left( \frac{x-1%
}{2}+3\right) ...\left( \frac{x-1}{2}+n\right) } \\
&=&\frac{\Gamma \left( \frac{x-1}{2}+1\right) \Gamma \left( \frac{x-1}{2}%
+1\right) }{\Gamma \left( \frac{x-1}{2}-\left( n-1\right) \right) \Gamma
\left( \frac{x-1}{2}+n+1\right) } \\
&=&\frac{\Gamma \left( z\right) ^{2}}{\Gamma \left( z-n\right) \Gamma \left(
z+n\right) },
\end{eqnarray*}%
to obtain that%
\begin{equation*}
\mathbf{B}_{n}\left( x\right) =\left[ \frac{\Gamma \left( z\right) ^{2}}{%
\Gamma \left( z-\left\vert j-i\right\vert \right) \Gamma \left( z+\left\vert
j-i\right\vert \right) }\right] _{i,j=1}^{n}
\end{equation*}%
Thus the first row of $\mathbf{B}_{n}\left( x\right) $ is%
\begin{eqnarray*}
&&\left( 
\begin{array}{ccccc}
1 & \frac{x-1}{x+1} & \frac{x-3}{x+3}\frac{x-1}{x+1} & \cdots & \frac{\left(
x-1\right) \left( x-3\right) \left( x-5\right) ...\left( x-\left(
2n-1\right) \right) }{\left( x+1\right) \left( x+3\right) \left( x+5\right)
...\left( x+\left( 2n-1\right) \right) }%
\end{array}%
\right) \\
&=&\left( 
\begin{array}{ccccc}
\frac{\Gamma \left( z\right) ^{2}}{\Gamma \left( z\right) \Gamma \left(
z\right) } & \frac{\Gamma \left( z\right) ^{2}}{\Gamma \left( z-1\right)
\Gamma \left( z+1\right) } & \frac{\Gamma \left( z\right) ^{2}}{\Gamma
\left( z-2\right) \Gamma \left( z+3\right) } & \cdots & \frac{\Gamma \left(
z\right) ^{2}}{\Gamma \left( z-\left( n-1\right) \right) \Gamma \left(
z+\left( n-1\right) \right) }%
\end{array}%
\right) \\
&=&\left[ \frac{\Gamma \left( z\right) ^{2}}{\Gamma \left( z-\left(
k-1\right) \right) \Gamma \left( z+\left( k-1\right) \right) }\right]
_{k=1}^{n}.
\end{eqnarray*}%
Thus we get%
\begin{eqnarray*}
&&\left[ \frac{\Gamma \left( z\right) ^{2}}{\Gamma \left( z-\left(
k-1\right) \right) \Gamma \left( z+\left( k-1\right) \right) }\right]
_{k=1}^{n}\cdot \mathbf{v}^{n}\left( x\right) \\
&=&-\sum_{k=1}^{n}\left( -1\right) ^{k}\left( 
\begin{array}{c}
n \\ 
k%
\end{array}%
\right) \frac{\Gamma \left( n-k+z\right) \Gamma \left( k-1+z\right) }{\Gamma
\left( n+z\right) \Gamma \left( -1+z\right) }\frac{\Gamma \left( z\right)
^{2}}{\Gamma \left( z-\left( k-1\right) \right) \Gamma \left( z+\left(
k-1\right) \right) } \\
&=&-\sum_{k=1}^{n}\left( -1\right) ^{k}\left( 
\begin{array}{c}
n \\ 
k%
\end{array}%
\right) \frac{\Gamma \left( z\right) ^{2}}{\Gamma \left( z+n\right) \Gamma
\left( z-1\right) }\frac{\Gamma \left( z-k+n\right) }{\Gamma \left(
z-k+1\right) } \\
&=&-\frac{\Gamma \left( z\right) ^{2}}{\Gamma \left( z+n\right) \Gamma
\left( z-1\right) }\sum_{k=1}^{n}\left( -1\right) ^{k}\left( 
\begin{array}{c}
n \\ 
k%
\end{array}%
\right) \left\{ \left( z-k+n-1\right) ...\left( z-k+1\right) \right\} \\
&=&-\frac{\Gamma \left( z\right) ^{2}}{\Gamma \left( z+n\right) \Gamma
\left( z-1\right) }\sum_{k=1}^{n}\left( -1\right) ^{k}\left( 
\begin{array}{c}
n \\ 
k%
\end{array}%
\right) P_{z}^{n}\left( k\right)
\end{eqnarray*}%
where $P_{z}^{n}\left( w\right) =\left( z-w+n-1\right) ...\left(
z-w+1\right) $ is a polynomial of degree $n-1$. Now recall that if $%
\bigtriangleup f\equiv f\left( 1\right) -f\left( 0\right) $ is the unit
difference operator at $0$, then 
\begin{equation*}
\bigtriangleup ^{n}f=\sum_{k=0}^{n}\left( -1\right) ^{k}\left( 
\begin{array}{c}
n \\ 
k%
\end{array}%
\right) f\left( k\right)
\end{equation*}%
Thus we have%
\begin{equation*}
\sum_{k=0}^{n}\left( -1\right) ^{k}\left( 
\begin{array}{c}
n \\ 
k%
\end{array}%
\right) P_{z}^{n}\left( k\right) =\bigtriangleup ^{n}P_{z}^{n}=0
\end{equation*}%
since $P_{z}^{n}$ has degree less than $n$, and so%
\begin{eqnarray*}
&&\left[ \frac{\Gamma \left( z\right) ^{2}}{\Gamma \left( z-\left(
k-1\right) \right) \Gamma \left( z+\left( k-1\right) \right) }\right]
_{k=1}^{n}\cdot \mathbf{v}^{n}\left( x\right) \\
&=&\frac{\Gamma \left( z\right) ^{2}}{\Gamma \left( z+n\right) \Gamma \left(
z-1\right) }\left( z+n-1\right) ...\left( z+1\right) \\
&=&\frac{\Gamma \left( z\right) ^{2}}{\Gamma \left( z+1\right) \Gamma \left(
z-1\right) }
\end{eqnarray*}%
which is the first component of $\mathbf{c}^{n}\left( x\right) $ as
required. A similar argument proves the equality of the remaining
components, and this completes the proof of Lemma \ref{vector}.
\end{proof}

\begin{lemma}
\label{vector'}For $n\geq 1$ we have%
\begin{equation*}
1-\mathbf{r}_{n}\left( x\right) \cdot \mathbf{v}_{n}\left( x\right) =\frac{%
\Omega _{n}^{n}\left( x-1\right) }{\left[ \Omega _{n}^{n}\left( \frac{x-1}{2}%
\right) \right] ^{2}}.
\end{equation*}
\end{lemma}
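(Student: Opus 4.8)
The plan is to evaluate the inner product $\mathbf{r}_{n}\left( x\right) \cdot \mathbf{v}^{n}\left( x\right) $ head-on, recycling both the $\Gamma $-function bookkeeping and the finite-difference device from the proof of Lemma \ref{vector}, and to show that with $z=\frac{x+1}{2}$ and $r=\left( n+x-1\right) \left( n+x-2\right) \cdots x$ one has
\begin{equation*}
\mathbf{r}_{n}\left( x\right) \cdot \mathbf{v}^{n}\left( x\right) =1-\frac{n!\,r\,\Gamma \left( z\right) ^{2}}{\Gamma \left( n+z\right) ^{2}}.
\end{equation*}
The lemma then follows at once, since $r=n!\,\Omega _{n}^{n}\left( x-1\right) $ and $\Gamma \left( n+z\right) /\Gamma \left( z\right) =z\left( z+1\right) \cdots \left( z+n-1\right) =n!\,\Omega _{n}^{n}\left( \frac{x-1}{2}\right) $, so that $1-\mathbf{r}_{n}\cdot \mathbf{v}^{n}=\frac{\left( n!\right) ^{2}\Omega _{n}^{n}\left( x-1\right) }{\left[ n!\,\Omega _{n}^{n}\left( \frac{x-1}{2}\right) \right] ^{2}}=\frac{\Omega _{n}^{n}\left( x-1\right) }{\left[ \Omega _{n}^{n}\left( \frac{x-1}{2}\right) \right] ^{2}}$.

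First I would write the two factors out. Since $\mathbf{B}_{n+1}\left( x\right) $ is the symmetric matrix $\left[ \frac{\Gamma \left( z\right) ^{2}}{\Gamma \left( z-\left\vert i-j\right\vert \right) \Gamma \left( z+\left\vert i-j\right\vert \right) }\right] $, the $j$-th entry of $\mathbf{r}_{n}\left( x\right) $ is $\frac{\Gamma \left( z\right) ^{2}}{\Gamma \left( z-\left( n+1-j\right) \right) \Gamma \left( z+\left( n+1-j\right) \right) }$, and from (\ref{def v}), using $\frac{x-1}{2}=z-1$, the $j$-th entry of $\mathbf{v}^{n}\left( x\right) $ is $\left( -1\right) ^{n+j}\binom{n}{j-1}\frac{\Gamma \left( j+z-1\right) \Gamma \left( n-j+z\right) }{\Gamma \left( z-1\right) \Gamma \left( n+z\right) }$. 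Multiplying and collecting $\Gamma $'s exactly as in Lemma \ref{vector} — pairing $\Gamma \left( j+z-1\right) $ with $\Gamma \left( z+j-n-1\right) $ to produce the monic degree-$n$ polynomial $\widetilde{Q}\left( i\right) =\left( i+z-1\right) \left( i+z-2\right) \cdots \left( i+z-n\right) $ in $i:=j-1$, and pairing $\Gamma \left( n-j+z\right) $ with $\Gamma \left( n-j+z+1\right) $ to produce the single factor $\frac{1}{\left( n-1+z\right) -i}$ — reduces the inner product to
\begin{equation*}
\mathbf{r}_{n}\left( x\right) \cdot \mathbf{v}^{n}\left( x\right) =\frac{\left( -1\right) ^{n+1}\Gamma \left( z\right) ^{2}}{\Gamma \left( z-1\right) \Gamma \left( n+z\right) }\sum_{i=0}^{n-1}\left( -1\right) ^{i}\binom{n}{i}\frac{\widetilde{Q}\left( i\right) }{\left( n-1+z\right) -i}\ .
\end{equation*}

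The heart of the matter is the evaluation of this sum. I would split $\frac{\widetilde{Q}\left( i\right) }{\left( n-1+z\right) -i}=-\widehat{S}\left( i\right) +\frac{r}{\left( n-1+z\right) -i}$, where $r=\widetilde{Q}\left( n-1+z\right) =\left( n+x-1\right) \cdots x$ and $\widehat{S}$ has degree $\leq n-1$ because $\widetilde{Q}\left( i\right) -r$ vanishes at $i=n-1+z$. The $\widehat{S}$-part is summed by the vanishing of the $n$-th finite difference of a polynomial of degree $<n$, $\sum_{i=0}^{n}\left( -1\right) ^{i}\binom{n}{i}\widehat{S}\left( i\right) =\left( -1\right) ^{n}\bigtriangleup ^{n}\widehat{S}\left( 0\right) =0$ (the device already used in Lemma \ref{vector}), leaving only a boundary term in $\widehat{S}\left( n\right) $; the $\frac{r}{\left( n-1+z\right) -i}$-part is summed, after $i\mapsto n-i$, by the classical partial-fraction identity $\sum_{i=0}^{n}\left( -1\right) ^{i}\binom{n}{i}\frac{1}{i+a}=\frac{n!}{a\left( a+1\right) \cdots \left( a+n\right) }$ with $a=z-1$, leaving a boundary term in $\frac{1}{z-1}$. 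The two boundary terms combine so that the $\pm \frac{r}{z-1}$ contributions cancel, and the sum collapses to $\left( -1\right) ^{n}\left[ \frac{\widetilde{Q}\left( n\right) }{1-z}+\frac{n!\,r\,\Gamma \left( z-1\right) }{\Gamma \left( z+n\right) }\right] $ with $\widetilde{Q}\left( n\right) =z\left( z+1\right) \cdots \left( z+n-1\right) =\Gamma \left( n+z\right) /\Gamma \left( z\right) $. Reinstating the prefactor, the $\widetilde{Q}\left( n\right) $-term becomes $-\frac{\Gamma \left( z\right) }{\left( 1-z\right) \Gamma \left( z-1\right) }=-\frac{z-1}{1-z}=1$ and the other term becomes $-\frac{n!\,r\,\Gamma \left( z\right) ^{2}}{\Gamma \left( n+z\right) ^{2}}$, which is the displayed formula above.

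The step I expect to cause the most trouble is the sign-and-index bookkeeping in the two boundary corrections: one must check carefully that the $i=n$ term dropped from the finite-difference sum, the $i=0$ term dropped from the reindexed partial-fraction sum, and the $-r$ shift used to lower the degree of $\widetilde{Q}$ all conspire so that the $r/\left( z-1\right) $ pieces cancel exactly — this cancellation is precisely what forces the surviving term to collapse to the constant $1$. (As in Lemma \ref{vector}, all $\Gamma $-ratios are to be read through their finite-product expansions, so nothing is affected by the poles of $\Gamma \left( z-m\right) $ when $x$ is near an odd integer; and the identification $\frac{n!\,r\,\Gamma \left( z\right) ^{2}}{\Gamma \left( n+z\right) ^{2}}=\frac{\Omega _{n}^{n}\left( x-1\right) }{\left[ \Omega _{n}^{n}\left( \frac{x-1}{2}\right) \right] ^{2}}$ recorded in the first paragraph is the elementary substitution $j+\frac{x-1}{2}=j+z-1$.)
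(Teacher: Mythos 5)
Your proposal is correct, and the computation checks out: with $z=\frac{x+1}{2}$ your reduction of the inner product to $\frac{(-1)^{n+1}\Gamma \left( z\right) ^{2}}{\Gamma \left( z-1\right) \Gamma \left( n+z\right) }\sum_{i=0}^{n-1}\left( -1\right) ^{i}\binom{n}{i}\frac{\widetilde{Q}\left( i\right) }{\left( n-1+z\right) -i}$ is, after the reindexing $i=n-1-k$, term-by-term identical to the sum the paper arrives at in (\ref{TBP}); your boundary-term bookkeeping does make the two $\pm r/\left( z-1\right) $ contributions cancel; and the final identification $\frac{n!\,r\,\Gamma \left( z\right) ^{2}}{\Gamma \left( n+z\right) ^{2}}=\frac{\Omega _{n}^{n}\left( x-1\right) }{\left[ \Omega _{n}^{n}\left( \frac{x-1}{2}\right) \right] ^{2}}$ is right. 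Where you genuinely diverge from the paper is in how that sum is evaluated. The paper first massages (\ref{TBP}) into the rational identity (\ref{TBP''}), then reduces its left-hand side by an $n$-fold iterative peeling --- at each stage splitting off a polynomial of degree $n-1$ annihilated by the $n^{th}$ difference --- until only a combination of simple poles $\sum_{k}A_{k}/\left( z+k-1\right) $ survives, and finally computes the partial-fraction coefficients of the right-hand side as residues and matches them, see (\ref{Ak equals}). You instead divide the monic degree-$n$ polynomial $\widetilde{Q}$ by the linear factor once, apply the finite-difference lemma a single time, and quote the classical identity $\sum_{k=0}^{n}\left( -1\right) ^{k}\binom{n}{k}\frac{1}{k+a}=\frac{n!}{a\left( a+1\right) \cdots \left( a+n\right) }$; that identity is standard (it is the binomial expansion of the Beta integral $\int_{0}^{1}t^{a-1}\left( 1-t\right) ^{n}dt$, and is essentially the prototype of the paper's residue computation), but you should cite it or prove it in a line to keep the argument self-contained. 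Your route is shorter, avoiding both the ``continuing in this way'' iteration and the separate residue calculation, while the paper's route is fully self-contained and yields the explicit coefficients $A_{k}$ as a by-product. One small point worth stating explicitly: you pair the entries of $\mathbf{r}_{n}\left( x\right) $ and $\mathbf{v}^{n}\left( x\right) $ in the natural column order coming from (\ref{def v}), whereas the paper lists both vectors in reversed order; the two conventions give literally the same sum, so nothing is affected, but flagging the convention would forestall confusion.
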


\begin{proof}
Again, this is an application of the fact that an $n^{th}$ order difference
of a polynomial of degree less than $n$ vanishes, but a bit more
complicated. Recall that 
\begin{equation*}
\Omega _{n}^{n}\left( a\right) \equiv \frac{\left( n+a\right) \left(
n-1+a\right) ...\left( 1+a\right) }{\left( n\right) \left( n-1\right)
...\left( 1\right) }=\frac{\Gamma \left( n+1+a\right) }{\Gamma \left(
1+a\right) n!},
\end{equation*}%
so that we have%
\begin{eqnarray*}
\frac{\Omega _{n}^{n}\left( x-1\right) }{\left[ \Omega _{n}^{n}\left( \frac{%
x-1}{2}\right) \right] ^{2}} &=&n!\frac{\left( n+x-1\right) \left(
n-1+x-1\right) ...\left( 1+x-1\right) }{\left( n+\frac{x-1}{2}\right)
^{2}\left( n-1+\frac{x-1}{2}\right) ^{2}...\left( 1+\frac{x-1}{2}\right) ^{2}%
} \\
&=&\frac{\Gamma \left( n+1\right) \Gamma \left( n+x\right) \Gamma \left( 1+%
\frac{x-1}{2}\right) ^{2}}{\Gamma \left( x\right) \Gamma \left( n+1+\frac{x-1%
}{2}\right) ^{2}}.
\end{eqnarray*}%
We also have%
\begin{eqnarray*}
\mathbf{v}^{n}\left( x\right) &\equiv &\left[ \left( -1\right) ^{k}\left( 
\begin{array}{c}
n \\ 
n-1-k%
\end{array}%
\right) \Gamma _{n-1-k}^{n}\left( \frac{x-1}{2}\right) \right] _{k=0}^{n-1}
\\
&=&\left[ \left( -1\right) ^{k}\left( 
\begin{array}{c}
n \\ 
k+1%
\end{array}%
\right) \frac{\Gamma \left( n-k+\frac{x-1}{2}\right) \Gamma \left( 1+k+\frac{%
x-1}{2}\right) }{\Gamma \left( n+1+\frac{x-1}{2}\right) \Gamma \left( \frac{%
x-1}{2}\right) }\right] _{k=0}^{n-1}\ ,
\end{eqnarray*}%
and from (\ref{def B}), we have%
\begin{eqnarray*}
\mathbf{r}_{n}\left( x\right) &=&\left( 
\begin{array}{ccccc}
\left( -1\right) ^{n}\frac{\left( 2n-1\right) -x}{\left( 2n-1\right) +x}%
\cdots \frac{3-x}{3+x}\frac{1-x}{1+x} & \cdots & \cdots & \frac{3-x}{3+x}%
\frac{1-x}{1+x} & -\frac{1-x}{1+x}%
\end{array}%
\right) \\
&=&\left[ \left( -1\right) ^{k+1}\frac{\left( 2k+1\right) -x}{\left(
2k+1\right) +x}\cdots \frac{3-x}{3+x}\frac{1-x}{1+x}\right] _{k=0}^{n-1} \\
&=&\left[ \frac{x-\left( 2k+1\right) }{\left( 2k+2\right) +x-1}\cdots \frac{%
x-3}{4+x-1}\frac{x-1}{2+x-1}\right] _{k=0}^{n-1} \\
&=&\left[ \frac{-2k+x-1}{\left( 2k+2\right) +x-1}\cdots \frac{-2+x-1}{4+x-1}%
\frac{x-1}{2+x-1}\right] _{k=0}^{n-1}\ ,
\end{eqnarray*}%
and hence dividing all factors top and bottom by $2$, we get%
\begin{eqnarray*}
\mathbf{r}_{n}\left( x\right) &=&\left[ \frac{-k+\frac{x-1}{2}}{k+1+\frac{x-1%
}{2}}\cdots \frac{-1+\frac{x-1}{2}}{2+\frac{x-1}{2}}\frac{\frac{x-1}{2}}{1+%
\frac{x-1}{2}}\right] _{k=0}^{n-1} \\
&=&\left[ \frac{\Gamma \left( 1+\frac{x-1}{2}\right) \Gamma \left( 1+\frac{%
x-1}{2}\right) }{\Gamma \left( -k+\frac{x-1}{2}\right) \Gamma \left( k+2+%
\frac{x-1}{2}\right) }\right] _{k=0}^{n-1} \\
&=&\left[ \frac{\Gamma \left( 1+\frac{x-1}{2}\right) ^{2}}{\Gamma \left( -k+%
\frac{x-1}{2}\right) \Gamma \left( k+2+\frac{x-1}{2}\right) }\right]
_{k=0}^{n-1}\ .
\end{eqnarray*}%
Thus our identity to be proved is%
\begin{eqnarray}
&&\sum_{k=0}^{n-1}\left( -1\right) ^{k}\left( 
\begin{array}{c}
n \\ 
k+1%
\end{array}%
\right) \frac{\Gamma \left( 1+\frac{x-1}{2}\right) ^{2}}{\Gamma \left( -k+%
\frac{x-1}{2}\right) \Gamma \left( k+2+\frac{x-1}{2}\right) }  \label{TBP} \\
&&\ \ \ \ \ \ \ \ \ \ \ \ \ \ \ \ \ \ \ \ \ \ \ \ \ \ \ \ \ \ \ \ \ \ \ \ \
\ \ \ \times \frac{\Gamma \left( n-k+\frac{x-1}{2}\right) \Gamma \left( k+1+%
\frac{x-1}{2}\right) }{\Gamma \left( n+1+\frac{x-1}{2}\right) \Gamma \left( 
\frac{x-1}{2}\right) }  \notag \\
&=&1-\frac{\Gamma \left( n+1\right) \Gamma \left( n+x\right) \Gamma \left( 1+%
\frac{x-1}{2}\right) ^{2}}{\Gamma \left( x\right) \Gamma \left( n+1+\frac{x-1%
}{2}\right) ^{2}}.  \notag
\end{eqnarray}%
If we set $z=1+\frac{x-1}{2}$ then this identity becomes%
\begin{eqnarray*}
&&\sum_{k=0}^{n-1}\left( -1\right) ^{k}\left( 
\begin{array}{c}
n \\ 
k+1%
\end{array}%
\right) \frac{\Gamma \left( z\right) ^{2}}{\Gamma \left( -k-1+z\right)
\Gamma \left( k+1+z\right) }\frac{\Gamma \left( n-k-1+z\right) \Gamma \left(
k+z\right) }{\Gamma \left( n+z\right) \Gamma \left( -1+z\right) } \\
&=&1-\frac{\Gamma \left( n+1\right) \Gamma \left( n-1+2z\right) \Gamma
\left( z\right) ^{2}}{\Gamma \left( -1+2z\right) \Gamma \left( n+z\right)
^{2}},
\end{eqnarray*}%
and if we replace $k$ by $k-1$ we get%
\begin{eqnarray*}
&&\sum_{k=1}^{n}\left( -1\right) ^{k-1}\left( 
\begin{array}{c}
n \\ 
k%
\end{array}%
\right) \frac{\Gamma \left( z\right) ^{2}}{\Gamma \left( -k+z\right) \Gamma
\left( k+z\right) }\frac{\Gamma \left( n-k+z\right) \Gamma \left(
k-1+z\right) }{\Gamma \left( n+z\right) \Gamma \left( -1+z\right) } \\
&=&1-\frac{\Gamma \left( n+1\right) \Gamma \left( n-1+2z\right) \Gamma
\left( z\right) ^{2}}{\Gamma \left( -1+2z\right) \Gamma \left( n+z\right)
^{2}}.
\end{eqnarray*}%
Note that the term $k=0$ in the sum on the left would be $-1$, so that we
can subtract $1$ from both sides, and then multiply by $-1$ to get%
\begin{eqnarray*}
&&\sum_{k=0}^{n}\left( -1\right) ^{k}\left( 
\begin{array}{c}
n \\ 
k%
\end{array}%
\right) \frac{\Gamma \left( z\right) ^{2}}{\Gamma \left( -k+z\right) \Gamma
\left( k+z\right) }\frac{\Gamma \left( n-k+z\right) \Gamma \left(
k-1+z\right) }{\Gamma \left( n+z\right) \Gamma \left( -1+z\right) } \\
&=&\frac{\Gamma \left( n+1\right) \Gamma \left( n-1+2z\right) \Gamma \left(
z\right) ^{2}}{\Gamma \left( -1+2z\right) \Gamma \left( n+z\right) ^{2}},
\end{eqnarray*}%
which is equivalent to%
\begin{equation}
\sum_{k=0}^{n}\left( -1\right) ^{k}\left( 
\begin{array}{c}
n \\ 
k%
\end{array}%
\right) \frac{\Gamma \left( z+n-k\right) \Gamma \left( z+k-1\right) }{\Gamma
\left( z-k\right) \Gamma \left( z+k\right) }=\frac{\Gamma \left( n+1\right)
\Gamma \left( z-1\right) \Gamma \left( 2z+n-1\right) }{\Gamma \left(
z+n\right) \Gamma \left( 2z-1\right) }.  \label{TBP'}
\end{equation}%
We now use 
\begin{equation*}
\frac{\Gamma \left( s+m+1\right) }{\Gamma \left( s\right) }=\left(
s+m\right) \left( s+m-1\right) ...\left( s+1\right) s
\end{equation*}%
to rewrite (\ref{TBP'}) as%
\begin{equation}
\sum_{k=0}^{n}\left( -1\right) ^{k}\left( 
\begin{array}{c}
n \\ 
k%
\end{array}%
\right) \frac{\left( z+n-k-1\right) ...\left( z-k\right) }{\left(
z+k-1\right) }=n!\frac{\left( 2z+n-2\right) ...\left( 2z\right) \left(
2z-1\right) }{\left( z+n-1\right) ...\left( z\right) \left( z-1\right) }.
\label{TBP''}
\end{equation}

Denote the left and right hand sides of (\ref{TBP''}) by $LHS_{n}\left(
z\right) $ and $RHS_{n}\left( z\right) $ respectively. Then the left hand
side $LHS_{n}\left( z\right) $ of (\ref{TBP''}) is%
\begin{eqnarray*}
LHS_{n}\left( z\right) &=&\sum_{k=0}^{n}\left( -1\right) ^{k}\left( 
\begin{array}{c}
n \\ 
k%
\end{array}%
\right) \frac{\left( z+n-k-1\right) ...\left( z+1-k\right) \left( \left[
z+k-1\right] -\left[ 2k-1\right] \right) }{\Gamma \left( z+k-1\right) } \\
&=&\sum_{k=0}^{n}\left( -1\right) ^{k}\left( 
\begin{array}{c}
n \\ 
k%
\end{array}%
\right) \left( z+n-k-1\right) ...\left( z+1-k\right) \\
&&+\sum_{k=0}^{n}\left( -1\right) ^{k+1}\left( 
\begin{array}{c}
n \\ 
k%
\end{array}%
\right) \frac{\left( z+n-k-1\right) ...\left( z+1-k\right) }{\left(
z+k-1\right) }\left( 2k-1\right) ,
\end{eqnarray*}%
where the first sum on the right hand side above vanishes since it is an $%
n^{th}$ order difference of the polynomial 
\begin{equation*}
P\left( w\right) \equiv \left( z+n-w-1\right) ...\left( z+1-w\right)
\end{equation*}%
of degree $n-1$. Thus we have%
\begin{eqnarray*}
LHS_{n}\left( z\right) &=&\sum_{k=0}^{n}\left( -1\right) ^{k+1}\left( 
\begin{array}{c}
n \\ 
k%
\end{array}%
\right) \frac{\left( z+n-k-1\right) ...\left( z+2-k\right) \left( \left[
z+k-1\right] -\left[ 2k-2\right] \right) }{\left( z+k-1\right) }\left(
2k-1\right) \\
&=&\sum_{k=0}^{n}\left( -1\right) ^{k+1}\left( 
\begin{array}{c}
n \\ 
k%
\end{array}%
\right) \left( z+n-k-1\right) ...\left( z+2-k\right) \left( 2k-1\right) \\
&&+\sum_{k=0}^{n}\left( -1\right) ^{k+2}\left( 
\begin{array}{c}
n \\ 
k%
\end{array}%
\right) \frac{\left( z+n-k-1\right) ...\left( z+2-k\right) }{\left(
z+k-1\right) }\left( 2k-2\right) \left( 2k-1\right) ,
\end{eqnarray*}%
where the first sum on the right hand side above vanishes since it is an $%
n^{th}$ order difference of the polynomial%
\begin{equation*}
P\left( w\right) \equiv \left( z+n-w-1\right) ...\left( z+2-w\right) \left(
2w-1\right)
\end{equation*}%
of degree $n-1$. Continuing in this way we get%
\begin{equation*}
LHS_{n}\left( z\right) =\sum_{k=0}^{n}\left( -1\right) ^{k+n}\left( 
\begin{array}{c}
n \\ 
k%
\end{array}%
\right) \frac{1}{\left( z+k-1\right) }\left( 2k-n\right) ...\left(
2k-2\right) \left( 2k-1\right) .
\end{equation*}

Now the right hand side $RHS_{n}\left( z\right) $ of (\ref{TBP''}) is a
quotient of a polynomial of degree $n$ by a polynomial of degree $n+1$, and
so has a partial fraction decomposition of the form%
\begin{equation*}
RHS_{n}\left( z\right) =n!\frac{\left( 2z+n-2\right) ...\left( 2z\right)
\left( 2z-1\right) }{\left( z+n-1\right) ...\left( z\right) \left(
z-1\right) }=\sum_{k=0}^{n}\frac{A_{k}}{z+k-1},
\end{equation*}%
for uniquely determined coefficients $A_{0},...A_{n}$. So the proof of (\ref%
{TBP''}) has been reduced to proving the identity, 
\begin{equation}
A_{k}=\left( -1\right) ^{k+n}\left( 
\begin{array}{c}
n \\ 
k%
\end{array}%
\right) \left( 2k-n\right) ...\left( 2k-2\right) \left( 2k-1\right) .
\label{Ak equals}
\end{equation}%
Now $A_{k}$ is the residue of the meromorphic function $RHS_{n}\left(
z\right) $ at $z=-\left( k-1\right) $, hence using the notation $\widehat{%
\left( z+k-1\right) }$ to indicate that the factor $\left( z+k-1\right) $ is 
\emph{missing}, we get 
\begin{eqnarray*}
A_{k} &=&res\left( RHS_{n}\left( z\right) ;-\left( k-1\right) \right) \\
&=&n!\frac{\left( 2z+n-2\right) ...\left( 2z\right) \left( 2z-1\right) }{%
\left( z+n-1\right) ...\left( z+k\right) \widehat{\left( z+k-1\right) }%
\left( z+k-2\right) ...\left( z\right) \left( z-1\right) }\mid _{z=-\left(
k-1\right) } \\
&=&n!\frac{\left( 2\left[ 1-k\right] +n-2\right) ...\left( 2\left[ 1-k\right]
\right) \left( 2\left[ 1-k\right] -1\right) }{\left( \left[ 1-k\right]
+n-1\right) ...\left( \left[ 1-k\right] +k\right) \widehat{\left( \left[ 1-k%
\right] +k-1\right) }\left( \left[ 1-k\right] +k-2\right) ...\left( \left[
1-k\right] \right) \left( \left[ 1-k\right] -1\right) } \\
&=&n!\frac{\left( -1\right) ^{n}\left( 2k-n\right) ...\left( 2k-2\right)
\left( 2k-1\right) }{\left( n-k\right) ...\left( 1\right) \widehat{\left(
0\right) }\left( -1\right) ^{k}\left( 1\right) ...\left( k-1\right) \left(
k\right) } \\
&=&\left( -1\right) ^{n-k}\frac{n!}{\left( n-k\right) !k!}\left( 2k-n\right)
...\left( 2k-2\right) \left( 2k-1\right) ,
\end{eqnarray*}%
which proves (\ref{Ak equals}). This completes the proof of Lemma \ref%
{vector'}.
\end{proof}

The proof of our claimed recursion (\ref{recursion''}) is now completed by
combining Lemmas \ref{vector} and \ref{vector'} with (\ref{block det'}).

\end{document}